\numberwithin{equation}{section}
\newtheorem{theorem}{Theorem}[section]
\newtheorem{proposition}[theorem]{Proposition}
\newtheorem{lemma}[theorem]{Lemma}
\newtheorem{corollary}[theorem]{Corollary}
\newtheorem{definition}[theorem]{Definition}
\newtheorem{remark}[theorem]{Remark}
\newtheorem{convention}[theorem]{Convention}
\newenvironment{proof}{\noindent\textbf{Proof}}
                      {$\Box$\vskip\theorempostskipamount}
\begin{document}

\title{\textbf{About the cyclically reduced product of words}}

\author{Carmelo Vaccaro}

\date{}

\maketitle

\begin{abstract} The cyclically reduced product of two words is the cyclically reduced form of the concatenation of the two words. While the reduced form of such a concatenation (which is the product of the free group) verifies many basic properties like for example associativity, the same is not true for the cyclically reduced product which has been very little studied in the literature.

Recently S. V. Ivanov has proved that the Andrews-Curtis conjecture (stated in 1965 and still not solved) is equivalent to a formulation where the reduced product is replaced by the cyclically reduced product (and the conjugations replaced by cyclic permutations).

In this paper we study properties of the cyclically reduced product $*$ and of the set of cyclically reduced words $\hat{\mathcal{F}}(X)$ equipped with $*$. In particular we find that even if $*$ is not commutative nor verifies the Latin square property, generalized versions of these properties hold true.

We also show that $\hat{\mathcal{F}}(X)$ equipped with $*$ and with cyclic permutations enjoys similar properties as the free group equipped with the reduced product and conjugations.
 \end{abstract}

\smallskip \smallskip

\textit{Key words}: cyclically reduced product, free monoid, free group, identities among relations.

\smallskip \smallskip

\textit{2010 Mathematics Subject Classification}: 20E05, 20M05, 68R15.

\section*{Introduction}

Let $X$ be a set of letters, let $X^{-1}$ be the set of inverses of elements of $X$ and let $\mathcal{M}(X \cup X^{-1})$ be the free monoid on $X \cup X^{-1}$. The elements of $\mathcal{M}(X \cup X^{-1})$ are the non-necessarily reduced words on $X$. We denote $\mathcal{F}(X)$ the free group on $X$ and we consider it as the subset of $\mathcal{M}(X \cup X^{-1})$ consisting of reduced words. We denote $\hat{\mathcal{F}}(X)$ the set of cyclically reduced words on $X$.

Let $\langle X | R \rangle$ be a presentation for a group $G$, with $X$ the set of generators and $R$ that of basic relators; then $G$ is isomorphic to $\mathcal{F}(X) / \mathcal{N}(R)$, where $\mathcal{N}(R)$ is the normal subgroup of $\mathcal{F}(X)$ normally generated by $R$. In particular $\mathcal{N}(R)$ is the set of all relators and is the smallest subset of $\mathcal{F}(X)$ containing $R$ and closed with respect to the reduced product and to the conjugations.

Given $v, w \in \mathcal{M}(X \cup X^{-1})$, for example $v = t x y$ and $w = y^{-1} z t^{-1}$ with $t, x, y, z \in X$, we can define three different products of $v$ by $w$:

\begin{enumerate}
	\item the \textit{concatenation}, $vw := t x y y^{-1} z t^{-1}$;
	
	\item the \textit{reduced product}, $v \cdot w := t x z t^{-1}$, namely the reduced form of the concatenation;
	
	\item the \textit{cyclically reduced product}, $u *v := x z$,	namely the cyclically reduced form of the concatenation.
\end{enumerate}

While the first two products (which correspond respectively to the products in the free monoid and in the free group) are very well studied, the third one has been almost completely ignored in the literature. This is maybe due to the fact that the cyclically reduced product does not verify important properties like for example associativity, so the structure of $\hat{\mathcal{F}}(X)$ is not as nice as that of $\mathcal{F}(X)$.

\smallskip

Despite the fact that the cyclically reduced form of a word plays a crucial role in the solution of the conjugacy problem for the free group, the cyclically reduced product has lacked applications to important research problems and has been considered only an exotic concept until not long ago: at our knowledge, before 2006 the only papers dealing with it were \cite{Rourke} and \cite{Scarabotti}.

But things have changed recently because in two papers of 2006 \cite{Ivanov06} and of 2018 \cite{Ivanov18} S. V. Ivanov has proved an extremely interesting result concerning the Andrews-Curtis conjecture: the conjecture (with and without stabilizations) is true if and only if in the definition of the conjecture we replace the operations of reduced product and conjugations with the cyclically reduced product and the cyclic permutations. 

The importance of this result stems from the fact that while there are infinitely many conjugates of one word, there are only finitely many cyclic permutations, thus making much easier the search of Andrews-Curtis trivializations by enumerations of relators, like for example the approaches used in \cite{BMc} or \cite{PU}.

\bigskip

\noindent \textbf{Goal of the paper.}

The goal of the present paper and of the two following ones (\cite{TwAs1} and \cite{TwAs2}) is to fill a hole in the literature concerning the study of the properties of the cyclically reduced product of words and the structure of $(\hat{\mathcal{F}}(X), *)$. In particular we show that $(\hat{\mathcal{F}}(X), *)$ is a magma\footnote{A magma is a set closed with respect to an operation} with a unique identity, without non-trivial idempotents and with each element having a unique inverse. We also show that $*$ is not associative, commutative nor verifies the Latin square property; however generalized versions of these properties hold true (see Section \ref{propHatF} for more details).

It appears that the structure of $\hat{\mathcal{F}}(X)$ becomes more interesting when we consider it equipped not only with the cyclically reduced product but also with the cyclic permutations. In that case (as it is shown also in the next section, Related works), $\hat{\mathcal{F}}(X)$ enjoys similar properties as those of the free group equipped with the reduced product and conjugations. 

In particular we prove in Theorem \ref{puzo} that for words $u$ and $v$ the cyclically reduced product $u*v$ is a cyclic permutation of $v*u$ and the identity among relations that follows from this fact is a generalization of the identity among relations that follows from the fact that in the free group $u \cdot v$ is a conjugate of $v \cdot u$. We also show in Remark \ref{vanKamp} that if $u$ and $v$ are relators of a group presentation, then the van Kampen diagrams associated with $u*v$ and $v*u$ are homeomorphic.

\bigskip

\noindent \textbf{Related works.}

The first work (at our knowledge) where the cyclically reduced product was used was a famous paper of C. Rourke of 1979 \cite{Rourke}. Here the author denoted $R_{\infty}$ the smallest subset of $\mathcal{F}(X)$ containing $R$ and closed with respect to the cyclically reduced product and to cyclic permutations. The definition of $R_{\infty}$ is like that of $\mathcal{N}(R)$ but with the cyclically reduced product and cyclic permutations instead of the reduced product and conjugations. The main result of \cite{Rourke} was that if $\langle X | R \rangle$ is a presentation of the trivial group then $R_{\infty}$ contains $X$, the set of generators of the presentation.


We observe that $R_{\infty} \subset \mathcal{N}(R)$, because the cyclically reduced product of two words is equal to a conjugate of the reduced product of these two words and because a cyclic permutation is a special case of conjugation.


This paper of Rourke is very well known because it is one of the firsts where the notion of ``pictures"\footnote{A picture is a sort of dual of a van Kampen diagram, see Ch$\ldotp$ 2 of \cite{Short}} was used, but surprisingly its main result has been almost completely ignored in the literature. The first article to rediscover it was a paper of 1999 of F. Scarabotti, where the main result of \cite{Rourke} was proved using an elementary algebraic argument instead of the advanced algebraic topology one used in \cite{Rourke}.


In 2006 S. V. Ivanov proved \cite{Ivanov06} that if for every $x \in X$ either $x$ or $x^{-1}$ occurs in at least one basic relator\footnote{We observe that the previous hypothesis is not restrictive because if $X = X_1 \cup X_2$ with the elements of $X_1$ or their inverses occurring in at least one basic relator and the elements of $X_2$ not occurring, then the group presented by $\langle X | R \rangle$ is the free product of $G_1$ times $\mathcal{F}(X_2)$, where $G_1$ is the group presented by $\langle X_1 | R \rangle$.} (i.e., in an element of $R$) then $R_{\infty}$ is the set of cyclically reduced relators, i.e., $R_{\infty} = \mathcal{N}(R) \cap \hat{\mathcal{F}}(X)$. 

In the above mentioned paper and in \cite{Ivanov18} S. V. Ivanov proved that the Andrews-Curtis conjecture is true if and only if the reduced product and the conjugations are replaced by the cyclically reduced product and the cyclic permutations. We observe that one side of this equivalence is obvious in view of the fact that $R_{\infty} \subset \mathcal{N}(R)$.

\bigskip

\noindent \textbf{Structure of the paper.}

In Section \ref{S1} we give the basic definitions and prove some elementary results about the reduced product, cyclic permutations and reversions of words.

In Section \ref{S2} we introduce and study the basic properties of cyclically reduced words and the cyclically reduced product.

In Section \ref{propHatF} we discuss the structure of $\hat{\mathcal{F}}(X)$ equipped with the cyclically reduced product.

In Section \ref{S3} we prove the main results, namely the fact that the cyclically reduced product $u*v$ is a cyclic permutation of $v*u$. This result is analogous to the well known result that the reduced product $u \cdot v$ is conjugate to $v \cdot u$. But the analogy is deeper than that, in particular the identity among relations that follows from the fact that $u*v$ is a cyclic permutation of $v*u$ is analogous to the identity that follows from the fact that $u \cdot v$ is conjugate to $v \cdot u$.

Appendix \ref{AppA} formally expresses these concepts concerning identities among relations. Finally in Appendix \ref{AppB} some technical results needed in Sections \ref{S2} and \ref{S3} are proved.

The dependencies among the different sections are the following: Section \ref{S1} and Appendix \ref{AppA} are independent from the other sections, although Appendix \ref{AppA} uses some notations defined in Section \ref{S1}. Section \ref{S2} depends on Section \ref{S1}. Appendix \ref{AppB} depends on  Sections \ref{S1} and \ref{S2}. Section \ref{S3} depends on all the others sections.

\section{Words, cyclic permutations and the reduced product} \label{S1}

Let $Y$ be a set and let us consider $\mathcal{M}(Y)$, the free monoid on $Y$. The elements of $Y$ are called \textit{letters}, those of $\mathcal{M}(Y)$ \textit{the words in $Y$}. As usual given words $v, w \in \mathcal{M}(Y)$ we will denote $vw$ the product of $v$ by $w$, which is the concatenation of the words $v$ and $w$. The word with no letters, which is the identity element of $\mathcal{M}(Y)$, is denoted 1. 

Let $v, w \in \mathcal{M}(Y)$; we say that $v$ is a \textit{subword of $w$} if there exist $p, q \in \mathcal{M}(Y)$ such that $w = p v q$. In this case we say that $v$ is a \textit{prefix of $w$} if $p = 1$ and that $v$ is a \textit{suffix of $w$} if $q = 1$.

Let $v = y_1 \dots y_n \in \mathcal{M}(Y)$ with $y_1, \dots, y_n \in Y$. The \textit{length of $v$} is defined as $|v| := n$. The \textit{reverse of $v$} is defined as the word $\underline{v} := y_n \dots y_1$, where the order of the letters is the reverse as that of $v$. 

\begin{remark} \label{reverse} \rm The reverse of the reverse of a word $v$ is $v$ itself. It is also obvious that if $v_1, v_2, \dots, v_n \in \mathcal{M}(Y)$ then the reverse of $v_1 v_2 \dots v_n$ is the word $\underline{v_n} \dots \underline{v_2} \, \underline{v_1}$. 
\end{remark}

\begin{remark} \label{commFG} \rm Two elements of a free group commute if and only if they are power of the same element, i.e., if $u, v \in \mathcal{F}(X)$ are such that $\rho(uv) = \rho(vu)$ then there exist $c \in \mathcal{F}(X)$ and $m, n \in \mathbb{Z}$ such that $u = \rho(c^m)$ and $v = \rho(c^n)$ (see Proposition I.2.17 of \cite{LS}).

In particular let $a, b, u \in \mathcal{F}(X)$ be such that $\rho(a u a^{-1}) = \rho(b u b^{-1})$. This implies that $\rho(b^{-1} a u) = \rho(u b^{-1} a)$ and since $\rho(b^{-1} a)$ and $u$ commute, there exist $c \in \mathcal{F}(X)$ and $m, n \in \mathbb{Z}$ such that $u = \rho(c^m)$ and $\rho(b^{-1} a) = \rho(c^n)$, in particular $a = \rho(b c^n)$.
\end{remark}

\begin{remark} \label{LeviLemma} \rm We have the following result, known as \textit{Levi's Lemma} (see \cite{ChofKar}, pag. 333 or \cite{Kar}, Theor. 2): \textit{let $u_1, u_2, v_1, v_2$ be words such that $u_1 u_2 = v_1 v_2$; then there exists a word $p$ such that either $u_1 = v_1 p$ and $v_2 = p u_2$ or $v_1 = u_1 p$ and $u_2 = p v_2$.} The two cases can be represented graphically in the following way,
	
\medskip
	
\hspace{2cm} 
\begin{tabular}{|c|c|}
	\hline 
	$\,\, u_1 \,$   & $u_2 \,$ \\
	\hline
\end{tabular}
	\quad  \hspace{1cm} and \hspace{1cm}
\begin{tabular}{|c|c|}
	\hline 
	$u_1$ & $\,\,u_2\,$ \\
	\hline
\end{tabular}
	
\hspace{2cm}
\begin{tabular}{|c|c|}
	\hline 
	$v_1$ & $\,\, v_2 \,\,\,$ \\
	\hline
\end{tabular}
	\quad \hspace{28.8mm}
\begin{tabular}{|c|c|}
	\hline 
	$\,\,v_1 \,\,$   & $v_2$ \\
	\hline
\end{tabular}
	
\medskip
	
\noindent and correspond to putting the bar separating $v_1$ and $v_2$ either inside $u_1$ or inside $u_2$. The case when this bar is exactly below that separating $u_1$ and $u_2$, i.e., when $u_1 = v_1$ and $u_2 = v_2$, can be considered a special case of both the cases. 
	
In general let us consider the word equation $u_1 \dots u_m = v_1 \dots v_n$, possibly with $m \neq n$. Any solution to this equation determines uniquely a way of putting $n - 1$ bars inside the $m$ spaces corresponding to $u_1, \dots, u_m$ and also a way of putting $m - 1$ bars inside the $n$ spaces corresponding to $v_1, \dots, v_n$. This is true even if some of the $u_i$ or $v_j$ are the empty word. Indeed if $u_i = 1$ or $v_j = 1$ then no bar must be contained in $u_i$ or $v_j$. 
	
We observe that a solution to the equation $u_1 \dots u_m = v_1 \dots v_n$ determines also a \textit{weak composition}\footnote{a weak composition for an integer is a composition when 0's are allowed} for $n - 1$ in $m$ parts and for $m - 1$ in $n$ parts.
	
We give the following as an example for $m = 4$ and $n = 3$:
	
\medskip

\hspace{3.5 cm}
\begin{tabular}{|c|c|c|c|}
	\hline 
	$u_1$ & $u_2$ &	$u_3$ & $\,\, u_4 \,\,$ \\
	\hline
\end{tabular}
	
\hspace{3.5 cm}
\begin{tabular}{|c|c|c|}
	\hline 
	$\,\, v_1 \,\,$ & $\,\,\,\,\,\,\, v_2 \,\,\,\,\,\,\,$ & $v_3$\\
	\hline
\end{tabular}

\medskip
	
\noindent Here we can say that there exist words $a, b, c$ such that $v_1 = u_1 a$, $u_2 = a b$, $v_2 = b u_3 c$ and $u_4 = c v_3$. This solution determines the weak compositions $(0, 1, 0, 1)$ for 2 and $(1, 2, 0)$ for 3, which are obtained by counting the number of bars inside each $u_i$ and each $v_j$ respectively.
\end{remark}

\begin{definition} \label{cycPermDef} \rm Let $w_1$ and $w_2$ be words and let $w:=w_1 w_2$. The word $w_2 w_1$ is called a \textit{cyclic permutation of $w$}. Given two words $u$ and $v$ the relationship ``$u$ is a cyclic permutation of $v$" is an equivalence that we denote $u \sim v$.
\end{definition}


\begin{remark} \label{reversesim} \rm It is obvious that if $u$ and $v$ are words and if $u \sim v$ then $\underline{u} \sim \underline{v}$. \end{remark}


Let $X$ be a set; we denote $\mathcal{F}(X)$ the free group on $X$ and we consider $\mathcal{F}(X)$ as a subset\footnote{Usually $\mathcal{F}(X)$ is considered a quotient of $\mathcal{M}(X \cup X^{-1})$, but in this paper we will not follow this habit.} of $\mathcal{M}(X \cup X^{-1})$. In particular $\mathcal{F}(X)$ is the set of \textit{reduced words on $X$}, i.e., the words of the form $x_1 \dots x_n$ with $x_i \in X \cup X^{-1}$ and $x_{i+1} \neq x_i^{-1}$ for $i = 1, \dots, n-1$.

Given $w := y_1 \dots y_n \in \mathcal{M}(X \cup X^{-1})$, with $y_i \in X \cup X^{-1}$ for $i = 1, \dots, n$, the inverse of $w$ is the word $w^{-1} := y_n^{-1} \dots y_1^{-1}$. Of course $w^{-1} \in \mathcal{F}(X)$ if and only if $w \in \mathcal{F}(X)$.

\begin{definition} \label{} \rm We denote $\rho: \mathcal{M}(X \cup X^{-1}) \rightarrow \mathcal{F}(X)$ the function sending a word to its unique \textit{reduced form}. Given $u, v \in \mathcal{F}(X)$ the product\footnote{The product of two reduced words in $\mathcal{F}(X)$ does not coincide with the product of the same words in $\mathcal{M}(X \cup X^{-1})$. In particular $\mathcal{F}(X)$ is not a subgroup of $\mathcal{M}(X \cup X^{-1})$.} of $u$ by $v$ in $\mathcal{F}(X)$ is defined as $u \cdot v := \rho(uv)$, i.e., it is equal to the reduced form of the concatenation $uv$. \end{definition}


\begin{convention} \label{conv=} \rm In this paper we adopt the following conventions:

1. With the term \textit{word} we mean a non-necessarily reduced word, i.e., an element of $\mathcal{M}(X \cup X^{-1})$.

2. Given $u$, $v_1, \dotsc, v_n \in \mathcal{M}(X \cup X^{-1})$, with the notation $u = v_1 \dots v_n$ we mean the equality in $\mathcal{M}(X \cup X^{-1})$ of $u$ with the concatenation of words $v_1 \dots v_n$ even if $u$ and all the $v_j$ belong to $\mathcal{F}(X)$. This kind of equality is called a \textit{factorization} of $v$ in the Combinatorics of Words literature (see \cite{Kar}, pag. 2 or \cite{ChofKar}, pag. 332). The equality in $\mathcal{F}(X)$ of $u$ with the reduced product of $v_1, \ldots, v_n$ will be denoted by $u = v_1 \cdot \, \ldots \, \cdot v_n$ and corresponds to the equality $\rho(u) = \rho(v_1 \ldots v_n)$ in $\mathcal{M}(X \cup X^{-1})$. 
\end{convention}

\begin{remark} \label{reducprod} \rm Given $u, v \in \mathcal{F}(X)$ there exist $u_1, v_1, a \in \mathcal{F}(X)$ ($a$ can be equal to 1) such that $u=u_1 a$, $v=a^{-1} v_1$ and $\rho(uv)=u_1 v_1$. Therefore $uv=u_1 a a^{-1} v_1$.   \end{remark}

\begin{remark} \label{obvi1} \rm It is obvious that a word is reduced if and only if its inverse is reduced if and only if its reverse is reduced. \end{remark}

\begin{remark} \label{obviRho} \rm The reduced form of a word is unique, no matter the sequence of cancellations we perform on that word to obtain its reduced form (see Theorem 1.2 in Chapter 1 of \cite{MKS}). This implies that for words $u, v$ we have that $\rho(uv) = \rho(\rho(u) \rho(v))$. 
 \end{remark}

\begin{remark} \label{necToCpc} \rm The following fact is obvious. Let $w$ be a word and let $v_1, v_2$ be words such that $\rho(w) = v_1 v_2$. Then there exist words $w_1, w_2$ such that $w = w_1 w_2$ and $\rho(w_1) = v_1$, $\rho(w_2) = v_2$. \end{remark}

\begin{remark} \label{cpCanc} \rm Let $w$ be a word and let $u$ be a cyclic permutation of $\rho(w)$. Then there exists a cyclic permutation $w'$ of $w$ such that $\rho(w') = \rho(u)$.
	
Indeed there exist words $v_1, v_2$ such that $\rho(w) = v_1 v_2$ and $u = v_2 v_1$. By Remark \ref{necToCpc} there exist words $w_1, w_2$ such that $w = w_1 w_2$ and $\rho(w_1) = v_1$, $\rho(w_2) = v_2$. Let us set $w' := w_2 w_1$.

We have that $\rho(w') = \rho(w_2 w_1) = \rho(\rho(w_2) \rho(w_1)) = \rho(v_2 v_1) = \rho(u)$, where we have used Remark \ref{obviRho}.
\end{remark}

\begin{proposition} \label{rfrev} Let $w$ be a word. Then $\rho(\underline{w}) = \underline{\rho(w)}$ and the cancellations made to obtain $\rho(\underline{w})$ from $\underline{w}$ are the reverse of those made when obtaining $\rho(w)$ from $w$.\end{proposition}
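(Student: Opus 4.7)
The plan is to prove both parts simultaneously by induction on the number of cancellation steps needed to reduce $w$ to $\rho(w)$, relying on the fact that a single cancellation commutes with the reversal operation.

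First I would establish the single-step lemma. Suppose $w$ admits a cancellation at adjacent positions $i,i+1$, so that $w = a \cdot z z^{-1} \cdot b$ for some $z \in X \cup X^{-1}$ (adopting the convention $(z^{-1})^{-1} = z$), and write $w' := ab$ for the resulting word. Applying Remark \ref{reverse} we get $\underline{w} = \underline{b} \cdot z^{-1} z \cdot \underline{a}$, so the two letters in positions $|w|-i,\,|w|-i+1$ of $\underline{w}$ are $z^{-1}$ and $z = (z^{-1})^{-1}$; this is again a cancellable pair. Performing it yields $\underline{b}\,\underline{a}$, which by Remark \ref{reverse} equals $\underline{ab} = \underline{w'}$. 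Hence one cancellation step $w \to w'$ translates, via reversal, into one cancellation step $\underline{w} \to \underline{w'}$ at the mirrored position.

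Now take any sequence of cancellations $w = w_0 \to w_1 \to \cdots \to w_k = \rho(w)$ that reduces $w$ to its reduced form. Applying the single-step correspondence iteratively gives a sequence of cancellations $\underline{w} = \underline{w_0} \to \underline{w_1} \to \cdots \to \underline{w_k} = \underline{\rho(w)}$ on $\underline{w}$, where each step occurs at the positions mirrored from those of the corresponding step in the original sequence. Since $\rho(w)$ is reduced, Remark \ref{obvi1} tells us that $\underline{\rho(w)}$ is reduced as well. By the uniqueness of the reduced form (Remark \ref{obviRho}) we therefore conclude $\rho(\underline{w}) = \underline{\rho(w)}$, and the second part of the statement is built into the construction: each cancellation in the reduction of $\underline{w}$ is precisely the mirror image (at the reversed positions, swapping the roles of the two letters) of a cancellation in the reduction of $w$.

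There is no real obstacle here — the argument is essentially a bookkeeping one. The only point that deserves care is verifying that the mirrored pair $z^{-1}z$ appearing in $\underline{w}$ is again a legitimate cancellable pair in $\mathcal{M}(X \cup X^{-1})$; this relies on the symmetry of the inversion map on $X \cup X^{-1}$ (namely $(z^{-1})^{-1} = z$), and once it is noted the rest follows by a straightforward induction.
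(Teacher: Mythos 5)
Your proposal is correct and follows essentially the same route as the paper: induction on the number of cancellations, using the observation that a single cancellation $zz^{-1}$ in $w$ becomes the cancellation $z^{-1}z$ at the mirrored position in $\underline{w}$, together with the fact that the reverse of a reduced word is reduced (Remark \ref{obvi1}) and the uniqueness of the reduced form (Remark \ref{obviRho}). Your packaging via an explicit single-step lemma applied along an arbitrary reduction sequence is a slightly cleaner bookkeeping of the same argument, and it establishes both parts of the statement.
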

	
\begin{proof} Let $n$ be the number of cancellations needed to obtain $\rho(w)$ from $w$; we prove the claim by induction on $n$.

First let $n = 1$. Then $w = w_1 x x^{-1} w_2$ for words $w_1, w_2, x$ and $\rho(w) = w_1 w_2$. We have that 
	$$\rho(\underline{w}) = \rho(\underline{w_2} \, \underline{x}^{-1} \, \underline{x} \, \underline{w_1}) = \rho(\underline{w_2} \, \underline{w_1}) = \rho(\underline{w_1 w_2}) = \underline{w_1 w_2} = \underline{\rho(w)},$$
where we have used Remark \ref{reverse} and the fact that since $w_1 w_2$ is reduced then by Remark \ref{obvi1} the word $\underline{w_1 w_2}$ is reduced. This proves the first part of the claim. For the second part we observe that we have the cancellation $x x^{-1}$ in $\rho(w)$ and its reverse $\underline{x}^{-1} \underline{x}$ in $\rho(\underline{w})$.

Now let $n > 1$ and the claim be true if the number of cancellations is less than $n$. In this case there exist words $w_1, w_2, x$ such that $w = w_1 x x^{-1} w_2$, $\rho(w) = \rho(w_1) \rho(w_2)$ (in particular the cancellation $x x^{-1}$ is the last one when obtaining $\rho(w)$ from $w$). By induction hypothesis since the cancellations in $w' := w_1 w_2$ are less than $n$, then we have that $\rho(\underline{w_1 w_2}) = \underline{\rho(w_1 w_2)}$, therefore
	$$\rho(\underline{w}) = \rho(\underline{w_2} \, \underline{x}^{-1} \, \underline{x} \, \underline{w_1}) = \rho(\underline{w_2} \, \underline{w_1}) =$$
	$$\rho(\underline{w_1 w_2}) = \underline{\rho(w_1 w_2)} = \underline{\rho(w_1 x x^{-1} w_2)} = \underline{\rho(w)},$$
proving the first part of the claim. For the second part we observe that the cancellations in $\rho(\underline{w})$ are those in $\rho(\underline{w_2})$, plus those in $\rho(\underline{w_1})$ and $\underline{x}^{-1} \underline{x}$. These cancellations are the reverse of those in $w$, which are the cancellations in $w_1$, those in $w_2$ and $x x^{-1}$. This proves the second part of the claim.	
\end{proof}


\begin{remark} \label{permCon} \rm Let $u$ and $v$ be words such that $u$ is a cyclic permutation of $v$; then the reduced form of $u$ is the reduced form of some conjugate of $v$.

Indeed there exist words $p, q$ such that $u = pq$ and $v = qp$; then $\rho(u) = \rho(pq) = \rho(p q p p^{-1}) = \rho(p v p^{-1})$. \end{remark}

\begin{remark} \label{uvw} \rm The following result is obvious: if $u, v, w$ are words such that $uv$ and $vw$ are reduced and $v \neq 1$ then $uvw$ is reduced. \end{remark}

\section{Cyclically reduced words and the cyclically reduced product} \label{S2}

In this section we introduce and study the basic properties of cyclically reduced words and the cyclically reduced product.

\begin{definition} \label{cyrewo}  \rm A reduced word is \textit{cyclically reduced} if its last letter is not the inverse of the first one, that is if all its cyclic permutations are reduced. We denote $\hat{\mathcal{F}}(X)$ the set of cyclically reduced words.   \end{definition}

\begin{remark} \label{obvi2} \rm It is obvious that a word is cyclically reduced if and only if its inverse is cyclically reduced if and only if its reverse is cyclically reduced. \end{remark}

\begin{remark} \label{obvi3} \rm Let $u$ be a cyclically reduced word; then the concatenation $u^2 = u u$ is also cyclically reduced. In general for any natural number $n$ the concatenation $u^n$ is cyclically reduced. \end{remark}

\begin{remark} \label{scope} \rm Given a word $w$ there exist unique $t \in \mathcal{F}(X)$ and $c \in \hat{\mathcal{F}}(X)$ such that $\rho(w) = t c t^{-1}$. The word $c$ is called the \textit{cyclically reduced form of $w$} and is denoted $\hat{\rho}(w)$. In particular we have the function 
	$$\hat{\rho}: \mathcal{M}(X \cup X^{-1}) \rightarrow \hat{\mathcal{F}}(X)$$ 
sending a word to its unique cyclically reduced form.
	
We have that $\rho(w) = t \hat{\rho}(w) t^{-1}$ and that $\rho(w)$ is cyclically reduced if and only if $t=1$. 
	
Moreover $\hat{\rho}(w) = \rho(t^{-1} w t)$, that is the cyclically reduced form of a word is equal to the reduced form of some conjugate of that word.
\end{remark}

\noindent \textbf{Example.} The word $x y z x y^{-1} x^{-1}$ is reduced but not cyclically reduced. Its cyclically reduced form is equal to $z x$. 

\medskip

\noindent \textbf{Remark.} In order to obtain the cyclically reduced form $\hat{\rho}(w)$ from a word $w$, first we have to make all ``internal" cancellations and after the ``external" cancellations. If we do not respect this order and we do some external cancellation before having done all the internal ones then we obtain a word which is a cyclic permutation of $\hat{\rho}(w)$. Theorem \ref{ordCanc} proves this fact.

\begin{remark} \label{yaObv} \rm It follows trivially from the definition that $\hat{\rho}(w) = \hat{\rho}(\rho(w))$ for a word $w$. \end{remark} 

\begin{remark} \label{} \rm We can say that in $\hat{\mathcal{F}}(X)$ a word is represented as a closed path, while in $\mathcal{F}(X)$ it is represented as an open path. This justifies why in $\mathcal{F}(X)$ we use the reduced form and in $\hat{\mathcal{F}}(X)$ the cyclically reduced form.

For example let us take the word $t x y y^{-1} z t^{-1}$. By representing it in a line as in Figure \ref{fig:wordLin}, the only consecutive opposite letters are $y y^{-1}$ and by canceling them we obtain the reduced form $t x z t^{-1}$.

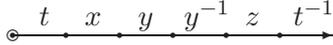
\begin{figure}[h!]

	\begin{picture}(80, 60)(20, 0)	 
	 	\put(140, 40){\vector(1,0){120}} 
	 
		\put(140, 40){\circle{4}}
		
		\put(140, 40){\circle*{2}}	
		\put(160, 40){\circle*{2}}	
		\put(180, 40){\circle*{2}}	
		\put(200, 40){\circle*{2}}	
		\put(220, 40){\circle*{2}}	
		\put(240, 40){\circle*{2}}
		
		\put(150, 44){$t$}	
		\put(167, 44){$x$}	
		\put(187, 44){$y$}	
		\put(204, 44){$y^{-1}$}	
		\put(227, 44){$z$}	
		\put(245, 44){$t^{-1}$}	
	\end{picture}

	\vspace*{-8mm}
	\caption{a word represented linearly}
	\label{fig:wordLin}
\end{figure}

On the other hand if we represent the same word in a cycle as in Figure \ref{fig:wordCyc}, not only the edges labeled by $y$ and $y^{-1}$ are contiguous, but also the edges labeled by $t^{-1}$ and $t$. The cyclically reduced form is $x z$ and its representation as a cycle is shown in Figure \ref{fig:wordCycCanc}. We observe that in order to obtain the reduced form from the cyclically reduced one we have to join a spine labeled $t t^{-1}$ to the initial point, as in Figure \ref{fig:wordCycCancSpine}.

\begin{figure}[h!]
	\begin{minipage}[b]{0.4\textwidth}
		\begin{picture}(80, 80) (100, 0) 
			\put(177, 40){\circle{38}}
			
			\put(157, 40){\circle*{2}}
			\put(167, 57){\circle*{2}}
			\put(187, 57){\circle*{2}}
			\put(197, 40){\circle*{2}}
			\put(187, 23){\circle*{2}}
			\put(167, 23){\circle*{2}}
			
			\put(175, 56){\vector(1,0){4}}
			\put(175, 42){\oval(27,27)[tl]}
			
			\put(157, 40){\circle{4}}
			
			\put(152, 49){$t$}
			\put(174, 63){$x$}
			\put(196, 51){$y$}
			\put(196, 26){$y^{-1}$}
			\put(174, 11){$z$}
			\put(144, 23){$t^{-1}$}
		\end{picture}
	\caption{a word represented as a cycle}
	\label{fig:wordCyc}
	\end{minipage}
	\hfill
	\begin{minipage}[b]{0.4\textwidth}
		\begin{picture}(80, 70) (110, 0) 
			\put(177, 40){\circle{38}}
			\put(157, 40){\line(1,0){40}}
		
			\put(157, 40){\circle{4}}
			\put(157, 40){\circle*{2}}
			\put(177, 40){\circle*{2}}
			\put(197, 40){\circle*{2}}
		
			\put(157, 57){$x$}
			\put(177, 60){\vector(1,0){4}}
		
			\put(184, 40){\vector(-1,0){4}}
		 	\put(187, 44){$y$}
		
			\put(166, 40){\vector(-1,0){4}}
		 	\put(169, 42){$t$}
		
			\put(177, 20){\vector(-1,0){4}}
			\put(195, 23){$z$}
		\end{picture}
		\vspace*{1mm}
		\caption{the cyclically reduced form of the same word}
		\label{fig:wordCycCanc}
	\end{minipage}
	\hfill
	\centering
	\begin{minipage}[b]{0.4\textwidth}
				\vspace*{7mm}
		\begin{picture}(80, 80) (95, 0) 
		
			\put(177, 40){\circle{38}}
			\put(127, 40){\line(1,0){70}}
		
			\put(127, 40){\circle{4}}
			\put(127, 40){\circle*{2}}
			\put(157, 40){\circle*{2}}
			\put(177, 40){\circle*{2}}
			\put(197, 40){\circle*{2}}
		
			\put(145, 40){\vector(1,0){4}}
		 	\put(134, 42){$t$}
		
			\put(157, 57){$x$}
			\put(177, 60){\vector(1,0){4}}
		
			\put(184, 40){\vector(-1,0){4}}
		 	\put(187, 44){$y$}
		
			\put(166, 40){\vector(-1,0){4}}
		 	\put(169, 42){$t$}
		
			\put(177, 20){\vector(-1,0){4}}
			\put(195, 23){$z$}
		\end{picture}
		\vspace*{-3mm}
		\caption{a spine is added to retrieve the reduced form}
		\label{fig:wordCycCancSpine}
	\end{minipage}
\end{figure}

\label{grafRep}

Given a representation of a word, it is easy to find the representation of its reverse or of a cyclic permutation. 

Let $w$ be represented linearly as in Figure \ref{fig:wordLin}; then its reverse has the same representation except that the direction is the opposite and the initial and final points are swapped. 

Let $w$ be represented as a cycle like in Figure \ref{fig:wordCyc}; then the reverse has the same representation as $w$ except the direction which is the opposite. The representation of a cyclic permutation of $w$ is the same as that of $w$ but the initial point is different.

From this we can also see the relationship between the representation of a word as a cycle and the concept of \textit{cyclic words}, also called \textit{circular words}, which are classes of equivalence of words under cyclic permutation. Indeed a cyclic word can be represented as in Figure \ref{fig:wordCyc} but without specifying the initial point.

\smallskip

We observe that in Figure \ref{fig:wordCycCanc} we have folded the edges corresponding to a cancellation and these edges are not part of the boundary label of the diagram. This is the same as in van Kampen diagrams (see Remark \ref{vanKamp} for a short and informal presentation of van Kampen diagrams or one of the many references in the literature like \cite{Short} for a more thorough one). 

The last sentence needs however a clarification. Indeed we can consider that the diagrams in Figures \ref{fig:wordCyc}-\ref{fig:wordCycCancSpine} are van Kampen diagrams where we have omitted to draw all internal edges, so not all regions are specified: indeed in a van Kampen diagram the labels of the regions must be basic relators and also the number of regions does not increase when we make a folding corresponding to a cancellation\footnote{indeed in Figure \ref{fig:wordCyc} there is one ``region'', while there are two in Figure \ref{fig:wordCycCanc}}. 

We can go even farther by saying that the diagram in Figure \ref{fig:wordCyc} represents the boundary of a van Kampen diagram of any type (with also spines) and that the process of folding edges done in Figure \ref{fig:wordCycCanc} for a word represented by any van Kampen diagram is compatible with that of folding edges in van Kampen diagrams (see the reference cited above for more details about van Kampen diagrams).
 \end{remark}

\begin{remark} \label{permCon2} \rm Let $w$ be a word and let $u$ be a cyclic permutation of $\hat{\rho}(w)$; then $u$ is the reduced form of some conjugate of $w$. 
	
Indeed there exist words $p, q$ such that $\hat{\rho}(w) = pq$ and $u = qp$. Then $u = \rho(p^{-1} p q p) = \rho(p^{-1} \hat{\rho}(w) p)$.

By Remark \ref{scope} we have that $\hat{\rho}(w) = \rho(t^{-1} w t)$ for some word $t$, so $u = \rho(p^{-1} \hat{\rho}(w) p) = \rho(p^{-1} t^{-1} w t p)$.

This implies one side of the well known equivalence which gives a solution to the conjugacy problem in free groups, (see Theorem 1.3 in Chapter 1 of \cite{MKS} or Proposition I.2.14 of \cite{LS}): two words are cyclic conjugate if and only if their cyclically reduced forms are cyclic permutations one of the other. Corollary \ref{permCycRedForm} implies the other side of this equivalence. 
\end{remark}

\begin{remark} \label{barrho=1} \rm Let $w$ be a word; then:
	\begin{enumerate} [(1)]
		\item $\big(\hat{\rho}(w)\big)^{-1} = \hat{\rho}(w^{-1})$.
		
		\item $\hat{\rho}(w) = 1$ if and only if $\rho(w) = 1$.
	\end{enumerate}
Indeed (1) is obvious from the definition. To prove (2) we observe that by Remark \ref{scope} we have $\rho(w) = t \hat{\rho}(w) t^{-1}$ for some word $t$. If $\hat{\rho}(w) = 1$ then $\rho(w) = t t^{-1}$ and $t$ should be equal to 1 since $\rho(w)$ is reduced. The opposite implication is obvious.
\end{remark}

\begin{remark} \label{redCycRed} \rm If $u$ and $v$ are words such that $\rho(u) = \rho(v)$ then $\hat{\rho}(u) = \hat{\rho}(v)$, i.e., two words with equal reduced form have also equal cyclically reduced form. The inverse implication is false, for instance if $u$ is a reduced but not cyclically reduced word and if $v = \hat{\rho}(u)$ then  $\hat{\rho}(u) = \hat{\rho}(v)$ but $\rho(u) \neq \rho(v)$.  \end{remark}

\begin{remark} \label{revInv} \rm The operations of inversion and reversion of words commute one with the other, that is given a word $v$ we have that $(\underline{v})^{-1} = \underline{(v^{-1})}$. Thus we will denote $\underline{v}^{-1}$ the inverse of the reversion of $v$ without fear of ambiguity. If $v = x_1 \dots x_n$ then obviously $\underline{v}^{-1} = x_1^{-1} \dots x_n^{-1}$. 
	
Moreover given words $u$ and $u'$ we have that $u$ is a cyclic permutation of $u'$ if and only if $\underline{u}$	is a cyclic permutation of $\underline{u'}$. Therefore $v$ is (cyclically) reduced if and only if $\underline{v}$ is (cyclically) reduced if and only if $v^{-1}$ is (cyclically) reduced. \end{remark}

\begin{proposition} \label{revCycRedForm} Let $w$ be a word. Then $\hat{\rho}(\underline{w}) = \underline{\hat{\rho}(w)}$ and the cancellations made to obtain $\hat{\rho}(\underline{w})$ from $\underline{w}$ are the reverse of those made to obtain $\hat{\rho}(w)$ from $w$.\end{proposition}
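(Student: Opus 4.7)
The plan is to reduce the statement for $\hat{\rho}$ to the analogous statement for $\rho$ established in Proposition \ref{rfrev}, using the unique factorization from Remark \ref{scope}.

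First I would apply Remark \ref{scope} to write $\rho(w) = t\,\hat{\rho}(w)\,t^{-1}$ with $t \in \mathcal{F}(X)$ and $\hat{\rho}(w) \in \hat{\mathcal{F}}(X)$ (with this concatenation being reduced). Then by Proposition \ref{rfrev}, $\rho(\underline{w}) = \underline{\rho(w)}$, and Remarks \ref{reverse} and \ref{revInv} give
\[
\rho(\underline{w}) \;=\; \underline{t\,\hat{\rho}(w)\,t^{-1}} \;=\; \underline{t^{-1}}\cdot \underline{\hat{\rho}(w)}\cdot \underline{t} \;=\; \underline{t}^{-1}\cdot \underline{\hat{\rho}(w)}\cdot \underline{t}.
\]
Setting $s := \underline{t}^{-1}$, so that $s^{-1} = \underline{t}$, this reads $\rho(\underline{w}) = s\,\underline{\hat{\rho}(w)}\,s^{-1}$. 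Remarks \ref{obvi1} and \ref{obvi2} give $s \in \mathcal{F}(X)$ and $\underline{\hat{\rho}(w)} \in \hat{\mathcal{F}}(X)$.

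Next I would verify that this concatenation is actually reduced, so that the uniqueness in Remark \ref{scope} applies and forces $\hat{\rho}(\underline{w}) = \underline{\hat{\rho}(w)}$. Writing $t = x_1 \dots x_n$ and $\hat{\rho}(w) = y_1 \dots y_m$, the reducedness of the original factorization $t\,\hat{\rho}(w)\,t^{-1}$ at its two junctions amounts to $y_1 \neq x_n^{-1}$ and $y_m \neq x_n$. One checks that the two junctions of $\underline{t}^{-1}\cdot\underline{\hat{\rho}(w)}\cdot\underline{t}$ are exactly $x_n^{-1} y_m$ and $y_1 x_n$, so the same two inequalities guarantee reducedness. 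Applying Remark \ref{scope} to $\underline{w}$ then yields $\hat{\rho}(\underline{w}) = \underline{\hat{\rho}(w)}$.

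Finally, for the claim about cancellations I would split the cancellations that turn $w$ into $\hat{\rho}(w)$ into two groups: the internal ones producing $\rho(w) = t\,\hat{\rho}(w)\,t^{-1}$, and the $|t|$ external ones that peel off $t$ against $t^{-1}$, letter by letter from the outside in, to expose $\hat{\rho}(w)$. Proposition \ref{rfrev} already handles the first group: the internal cancellations for $\underline{w}$ are the reversals of the internal cancellations for $w$, and produce $\underline{t}^{-1}\underline{\hat{\rho}(w)}\underline{t}$. For the second group, the external pair $x_i\,x_i^{-1}$ sitting symmetrically around $\hat{\rho}(w)$ in $t\,\hat{\rho}(w)\,t^{-1}$ becomes, after reversal, the pair $x_i^{-1}\,x_i$ sitting in the mirrored position around $\underline{\hat{\rho}(w)}$ in $\underline{t}^{-1}\underline{\hat{\rho}(w)}\underline{t}$, and these are precisely the external cancellations taking $\rho(\underline{w})$ to $\underline{\hat{\rho}(w)}$, performed in the reverse order. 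Combining the two groups yields the claim. The only real obstacle is this bookkeeping for the external cancellations, which is straightforward given Remarks \ref{reverse} and \ref{revInv}.
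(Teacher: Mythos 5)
Your proof is correct and follows essentially the same route as the paper: both reverse the unique factorization $\rho(w)=t\,\hat{\rho}(w)\,t^{-1}$ from Remark \ref{scope}, invoke Proposition \ref{rfrev} for the internal cancellations, and observe that the external cancellation of $t^{-1}t$ reverses to that of $\underline{t}\,\underline{t}^{-1}$. The only difference is cosmetic: your junction-by-junction check of reducedness is redundant, since $\underline{t}^{-1}\,\underline{\hat{\rho}(w)}\,\underline{t}$ is the reverse of the reduced word $\rho(w)$ and hence reduced by Remark \ref{obvi1}.
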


\begin{proof} First let us assume that $w$ is reduced. By Remark \ref{scope} there exists a word $t$ such that $w = t \hat{\rho}(w) t^{-1}$, thus $\underline{w} = \underline{t}^{-1} \,\underline{\hat{\rho}(w)} \, \underline{t}$ and since $\underline{\hat{\rho}(w)}$ is cyclically reduced by Remark \ref{obvi2}, then $\hat{\rho}(\underline{w}) = \underline{\hat{\rho}(w)}$.

Now let $w$ be non-necessarily reduced. Then by Remark \ref{yaObv} and Proposition \ref{rfrev} we have
	$$\hat{\rho}(\underline{w}) = \hat{\rho}(\rho(\underline{w})) = \hat{\rho}(\underline{\rho(w)}) = \underline{\hat{\rho}(\rho(w))} = \underline{\hat{\rho}(w)}.$$
The cancellations made to obtain $\hat{\rho}(\underline{w})$ from $\underline{w}$ are those made to obtain $\rho(\underline{w})$ from $\underline{w}$ plus $\underline{t} \, \underline{t}^{-1}$. On the other hand the cancellations made to obtain $\hat{\rho}(w)$ from $w$ are those made to obtain $\rho(w)$ from $w$ plus $t^{-1} t$. The second part of the claim follows then from the fact that by Proposition \ref{rfrev} the cancellations made to obtain $\hat{\rho}(\underline{w})$ from $\underline{w}$ are the reverse of those made to obtain $\rho(w)$ from $w$.	
\end{proof}

\begin{definition} \label{key} \rm Given words $u$ and $v$ we denote $u*v$ the \textit{cyclically reduced product of $u$ by $v$}, i.e., $u * v := \hat{\rho}(uv)$. 
\end{definition}

\begin{remark} \label{jarem} \rm For every word $w$ we have that $w * 1 = 1 * w = \hat{\rho}(w)$, thus the word 1 is an identity in $\hat{\mathcal{F}}(X)$.

We show now that 1 is the unique idempotent (and thus the unique identity element) in $\hat{\mathcal{F}}(X)$. Indeed let $w \in \hat{\mathcal{F}}(X)$ be such that $w \neq 1$. Then since $w$ is cyclically reduced the word $ww$ is cyclically reduced and thus $w*w = ww \neq w$.

We have also that for each element $u \in \hat{\mathcal{F}}(X)$ the inverse of $u$ in $\mathcal{F}(X)$ is the unique left inverse and the unique right inverse of $u$ in $\hat{\mathcal{F}}(X)$. Indeed (2) of Remark \ref{barrho=1} implies that $u*v = 1$ if and only if $\rho(u v) = 1$, thus if and only if $\rho(v) = \rho(u^{-1})$. If $u$ is reduced (in particular if it is cyclically reduced) this implies that $u^{-1}$ is the unique right inverse of $u$. The same argument shows that $u^{-1}$ is the unique left inverse of $u$.
\end{remark}

\begin{remark} \label{u*vNonRed} \rm We have that $u*v = \rho(u)*\rho(v)$. More generally let $u_1, v_1$ be words such that $\rho(u_1) = \rho(u)$ and $\rho(v_1) = \rho(v)$. Then $u*v = u_1 * v_1$.
	
Indeed by Remarks \ref{redCycRed} and \ref{obviRho} we have that
	$$u*v = \hat{\rho}(uv) = \hat{\rho}\big(\rho(uv)\big) = \hat{\rho}\Big(\rho\big(\rho(u) \rho(v)\big)\Big) =$$
	$$\hat{\rho}\Big(\rho\big(\rho(u_1) \rho(v_1)\big)\Big) = \hat{\rho}\big(\rho(u_1 v_1)\big) = \hat{\rho}(u_1 v_1) = u_1 * v_1.$$
\end{remark}

\begin{remark} \label{revCRP} \rm If $u$ and $v$ are words then the reverse of $u*v$ is equal to $\underline{v}*\underline{u}$ and the cancellations made to obtain $\underline{v}*\underline{u}$ from $\underline{v} \, \underline{u}$ are the reverse of those made to obtain $u*v$ from $uv$.
	
Indeed by Remark \ref{reverse} and Proposition \ref{revCycRedForm}
	$$\underline{v} * \underline{u} = \hat{\rho}(\underline{v} \, \underline{u}) = \hat{\rho}(\underline{uv}) = \underline{\hat{\rho}(uv)} = \underline{u*v},$$
proving the first part of the claim. The second part follows from Proposition \ref{revCycRedForm}.
\end{remark}

\section{Properties of $\hat{\mathcal{F}}(X)$} \label{propHatF}

In this section we discuss the structure of $\hat{\mathcal{F}}(X)$ equipped with the cyclically reduced product.

\smallskip

$\hat{\mathcal{F}}(X)$ is closed with respect to $*$, so $(\hat{\mathcal{F}}(X), *)$ is a \textit{magma}\footnote{A magma is a set closed with respect to an operation, see \cite{EncyMath}.}. The word 1 is the unique identity element and is also the unique idempotent; moreover for every $w \in \hat{\mathcal{F}}(X)$ the element $w^{-1}$ is the unique inverse of $w$.
In particular $w^{-1}$ is the unique left and the unique right inverse of $w$ (see Remark \ref{jarem}).

If $X$ has only one element, then a word is cyclically reduced if and only if it is reduced, so in that case $\hat{\mathcal{F}}(X) = \mathcal{F}(X)$ and the cyclically reduced product is the same as the reduced product.

Let $X$ have at least two elements; then $\hat{\mathcal{F}}(X)$ is a proper subset of $\mathcal{F}(X)$ and moreover $*$ is not associative, therefore $(\hat{\mathcal{F}}(X), *)$ is not a group. Indeed let $x, y \in X$ be such that $x \neq y$ and let us set $u := xy$, $v := x^{-1}$ and $w := x$; then $(u*v)*w = yx$ while $u*(v*w) = xy$. 

However a more general result holds true: in \cite{TwAs1} and \cite{TwAs2} we prove that $*$ verifies a ``twisted'' (i.e., up to cyclic permutations) version of the associative property.

The operation * is also not commutative but we prove in Theorem \ref{puzo} that for any $u, v$ we have that $u*v$ is a cyclic permutation of $v*u$ (thus a ``twisted'' commutative property holds true, using the terminology introduced above).


\bigskip

\noindent \textbf{Latin square and cancellation properties.} 

\smallskip

Since $\mathcal{F}(X)$ is a group we know that given $u, w \in \mathcal{F}(X)$ there exist unique $v_1, v_2 \in \mathcal{F}(X)$ such that $u \cdot v_1 = w$ and $v_2 \cdot u = w$, in particular $v_1 = u^{-1} \cdot w$ and $v_2 = w \cdot u^{-1}$. Moreover $v_1$ and $v_2$ are conjugates.

This property is a generalization of the existence of an inverse and is called \textit{Latin square property}, see \cite{Jordan}. 

It is natural to ask whether that property holds true also for $\hat{\mathcal{F}}(X)$ equipped with *. The answer is no if $w \neq 1$, so $\hat{\mathcal{F}}(X)$ is not a \textit{quasi-group}\footnote{A quasi-group is a magma where the Latin square property holds, see \cite{EncyMath}).}. Indeed as Corollary \ref{ax=b2} shows, $\hat{\mathcal{F}}(X)$ verifies a weaker property than the Latin square: if $u, w \neq 1$ there exist infinitely many pairs of words $v_1$ and $v_2$ such that $v_1$ and $v_2$ are cyclic permutations one of the other and $u * v_1 = v_2 * u = w$. In particular it is the uniqueness that does not hold in $\hat{\mathcal{F}}(X)$.

This result also implies that no element of $\hat{\mathcal{F}}(X)$ has either the left or the right cancellation property. It also implies that even if 1 is the unique identity element of $\hat{\mathcal{F}}(X)$, then for any $u \in \hat{\mathcal{F}}(X)$ such that $u \neq 1$ there exist infinitely many $v_1$ and $v_2$ such that $u * v_1 = u$ and $v_2 * u = u$.

\smallskip

We observe that if we set $v_1 := u^{-1} * w$ and $v_2 := w * u^{-1}$, it is not true in general that $u * v_1 = w$ and $v_2 * u = w$. Indeed let $x, y \in X$ be such that $x \neq y$ and let $u := xy$ and $w := y^2$; then $u * v_1 = x y x^{-1} y \neq w$, while $v_2 * u = w$. Let $u := xy$ and $w := x^2$; then $u * v_1 = w$, while $v_2 * u = x y^{-1} x y \neq w$. Finally if $u := y x y$ and $w := y x^{-1} y$, then $u * v_1 = y x y x^{-2} \neq w$, $v_2 * u = x^{-2} y x y \neq w$ and $u * v_1 \neq v_2 * u$.

However a more general result holds true: in \cite{TwAs1} we prove that there exists a cyclic permutation $u'$ of $u$ such that either $u'*v_1$ is a cyclic permutation of $w$ or there exists a word $h$ such that $u'hv_1h^{-1}$ is cyclically reduced and is a cyclic permutation of $w$. A symmetrical result holds true for $v_2$ in place of $v_1$.

\section{A generalization of a result holding in the free group} \label{S3}

This section uses results and definitions of Appendices \ref{AppA} and \ref{AppB}. In particular in Section \ref{AppA} identities among relations are defined.

We recall that $\rho$ and $\hat{\rho}$ denote respectively the reduced form and the cyclically reduced form of a word and that $*$ denotes the cyclically reduced product of two words.

The next theorem generalizes to the cyclically reduced product the following result: given words $u, v$ the reduced product $u \cdot v =\rho(uv)$ is a cyclic conjugate of $v \cdot u$ and the identity among relations that by Remark \ref{idFrom} follows from this fact semi-Peiffer collapses to 1. 

Indeed we have that $\rho(uv) = \rho\big(u(vu)u^{-1}\big)$, which implies the identity among relations
	$$u \centerdot v \equiv uvu^{-1} \centerdot u u u^{-1},$$
which is equivalent to the identity in normal form
	$$u \centerdot v \centerdot u u^{-1} u^{-1} \centerdot u v^{-1} u^{-1} \equiv 1.$$
The element of $H$ corresponding\footnote{$H$ is defined at page \pageref{H} in appendix \ref{AppA}} to this equivalence is 
	\begin{equation} \label{eqBefThCom} (1, u) \, (1, v) \, (u, u^{-1}) \, (u, v^{-1}),\end{equation} 
which by means of an exchange of type A-2 transforms to 
	$$(1, u) \, (u, u^{-1}) \, (u, v) \, (u, v^{-1}).$$
The latter reduces to 1 with one semi-Peiffer deletion between the first and second term and one Peiffer deletion between the third and fourth.

We will prove that for the cyclically reduced product we have a generalization of the same result with cyclic permutation instead of cyclic conjugation. In particular the identity among relations is a generalization of that of (\ref{eqBefThCom}). Moreover we have a result that does not hold for the reduced product, namely that the cancellations in $u*v$ are the same up to cyclic permutation as those in $v*u$. 

As we will se in Remark \ref{vanKamp}, the latter fact implies that if $u$ and $v$ are relators of a group presentation, then the van Kampen diagrams associated with $u*v$ and $v*u$ are homeomorphic possibly with different initial points. This of course does not hold for the reduced product because if $u \cdot v$ and $v \cdot u$ are not cyclic permutation one of the other, even their boundaries cannot be homeomorphic.

\begin{theorem} \label{puzo} Let $u$ and $v$ be words such that $u*v \neq 1$; then
\begin{enumerate} [i)]
	\item $u*v$ is a cyclic permutation of $v*u$.
\end{enumerate}
 Now let $u$ and $v$ be reduced; then we have also that 
\begin{enumerate} [i)]
  \setcounter{enumi}{1}
	\item the words canceled when obtaining $u*v$ from $uv$ are the same up to cyclic permutation as those canceled when obtaining $v*u$ from $vu$;

	\item the identity among relations involving $u, v, u^{-1}, v^{-1}$ that by Remark \ref{idFrom} follows from the equivalence in \textit{i}) is cyclic permutation either in the first and third terms or in the second and fourth terms;

	\item that identity semi-Peiffer collapses to 1 by means of the following sequence of $2n+3$ operations: $n$ exchanges of type B-1; $n$ exchanges of type B-3; an exchange of type A-2; a semi-Peiffer deletion between the first and second term; a Peiffer deletion between the third and fourth terms;

	\item if there exist words $\alpha$, $\beta$, $u'$, $v'$ such that $u = \alpha u' \beta$ and $v = \beta^{-1} v' \alpha^{-1}$ then the words $\beta \beta^{-1}$ and $\alpha^{-1} \alpha$ are canceled when obtaining $u*v$ from $uv$ and when obtaining $v*u$ from $vu$.
\end{enumerate}
\end{theorem}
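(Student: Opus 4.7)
My plan is to establish (i) first as the foundation, then handle (ii) and (v) together through a structural analysis of cancellations, and finally derive (iii) and (iv) from the explicit cyclic-rotation datum supplied by (i). For (i) itself, the starting observation is that $vu$ is literally a cyclic permutation of $uv$ in the sense of Definition \ref{cycPermDef}: split $uv$ at the boundary between $u$ and $v$ and swap the two blocks. It then suffices to prove the general fact that $w \sim w'$ implies $\hat{\rho}(w) \sim \hat{\rho}(w')$: apply Remark \ref{cpCanc} to pass to a cyclic permutation at the level of $\rho$, then use Remark \ref{scope} to observe that cyclic permutations of a reduced word have cyclically reduced forms that are themselves cyclic permutations one of the other. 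The hypothesis $u*v \neq 1$ is used only to exclude the trivial case, which by part (2) of Remark \ref{barrho=1} amounts to $\rho(uv) \neq 1$.

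For (ii), with $u$ and $v$ reduced, no cancellation in $uv$ can occur entirely within $u$ or entirely within $v$, so the cancellations split into two kinds: internal ones spanning the junction between $u$ and $v$, and external (cyclic) ones between the tail and the head of the partially reduced word. Viewing $uv$ and $vu$ as the same cyclic word read from two different starting points (the point of view developed on p.~\pageref{grafRep}), the pairs of letters that cancel must coincide; only the labels \emph{internal} versus \emph{external} get exchanged. This simultaneously proves (v): for the given decomposition $u = \alpha u' \beta$, $v = \beta^{-1} v' \alpha^{-1}$, the pair $\beta\beta^{-1}$ is present at the junction of $uv$ and cancels internally, after which the intermediate word $\alpha u' v' \alpha^{-1}$ exhibits the cyclic pair $\alpha^{-1}\alpha$, which cancels externally; symmetrically in $vu$ the pair $\alpha^{-1}\alpha$ cancels internally and $\beta\beta^{-1}$ cyclically. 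By Theorem \ref{ordCanc} (internal cancellations first, external afterwards) any valid reduction sequence must perform exactly these cancellations.

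For (iii) and (iv), once (i) is rendered concretely as ``$u*v$ is obtained from $v*u$ by a cyclic rotation of some length $n$'', the identity produced by Remark \ref{idFrom} reads off directly from this rotation and exhibits a cyclic-permutation relationship in either the first and third terms or in the second and fourth, depending on whether the rotation shifts across letters that originated in $u$ or in $v$; this is what (iii) asserts. For (iv) I would carry out the prescribed sequence of $2n+3$ operations in order: the $n$ exchanges of type B-1 and the $n$ exchanges of type B-3 undo the rotation one step at a time, the exchange of type A-2 rearranges what remains into the shape of the identity displayed in equation (\ref{eqBefThCom}) of the excerpt, and the two terminal deletions (semi-Peiffer between terms 1 and 2, Peiffer between terms 3 and 4) collapse the identity to 1. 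The main obstacle is precisely this last step: tracking the cyclic positions through the $2n+3$ operations and verifying that each exchange is of the claimed type at the moment it is applied requires unpacking the machinery of Appendix \ref{AppA} and matching it step by step against the rotation structure produced by (i), which is the most notationally intricate portion of the argument.
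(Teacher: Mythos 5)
Your plan for part \emph{i)} has a genuine gap that propagates to everything else. You reduce \emph{i)} to the general claim that $w \sim w'$ implies $\hat{\rho}(w) \sim \hat{\rho}(w')$, but that claim \emph{is} Corollary \ref{cycRedPerm}, which the paper deduces \emph{from} Theorem \ref{puzo}; assuming it here is circular unless you supply an independent proof, and the route you sketch does not give one. Remark \ref{cpCanc} goes in the wrong direction (it starts from a cyclic permutation of $\rho(w)$ and produces one of $w$), and the converse you would need at the level of $\rho$ is false: for $w = xyy^{-1}$ and $w' = y^{-1}xy$ one has $\rho(w') = y^{-1}xy$, which is not the reduced form of any cyclic permutation of $\rho(w) = x$. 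Likewise your appeal to Remark \ref{scope} only handles cyclic permutations of an already \emph{reduced} word, whereas $uv$ is generally not reduced even when $u$ and $v$ are. The same circularity affects \emph{ii)} and \emph{v)}, where you invoke Theorem \ref{ordCanc}: that theorem rests on Corollary \ref{permCycRedForm}, hence again on Theorem \ref{puzo}. The paper avoids all of this by reducing to reduced $u,v$ via Remark \ref{u*vNonRed} and then running an explicit case analysis (Lemma \ref{shirv}) on how $\rho(u)$ sits inside $t(u*v)t^{-1}$, with Lemma \ref{complic} supplying the decomposition $u*v = w_1w_2$, $a^{-1}s = b_1w_2(u*v)^n$ in the absorbing case; \emph{i)} and \emph{ii)} are then read off from explicit factorizations, not from a general permanence principle.

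For \emph{ii)} the assertion that ``the pairs of letters that cancel must coincide'' because $uv$ and $vu$ are the same cyclic word is precisely what has to be proved, and it is not automatic: which pairs cancel, and whether a cancellation is internal or external, depends on the cut point, and in the case where $u$ is entirely absorbed into $v$ the two cancellation patterns only agree after identifying words such as $w_2^{-1}w_2$ with $w_2w_2^{-1}$ and $(w_1w_2)^{-n}(w_1w_2)^{n}$ with $(w_1w_2)^{n}(w_1w_2)^{-n}$ --- exactly the bookkeeping that Lemma \ref{complic} makes possible. Finally, for \emph{iii)} and \emph{iv)} you misidentify the integer $n$: it is not the length of the cyclic rotation relating $u*v$ to $v*u$ but the exponent of $(u*v)$ wound inside the cancelled part (from Lemma \ref{complic}), and the collapse is obtained by verifying that the four conjugated terms have the normal forms $p^{-n}q^{-1}$, $qp^{n+1}$, $p^{n}q$, $q^{-1}p^{-n-1}$ required by Lemma \ref{collapseh}. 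Your proposal defers exactly this verification (``tracking the cyclic positions through the $2n+3$ operations''), which is the substantive content of \emph{iv)}, so that part remains unproved.
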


\begin{proof} First let us assume that $u$ and $v$ are reduced: we show that the claims are true for the three cases of Lemma \ref{shirv}. 
	
\textbf{1.} We have that $u = u_1 a$, $v = a^{-1} s (u*v) s^{-1} u_1^{-1}$, 
	$$\rho(uv) = u_1 s (u*v) s^{-1} u_1^{-1},$$ 
which implies that 	
	\begin{equation} \label{eqThCom1.1} u*v = \rho(s^{-1} u_1^{-1} uv u_1 s)\end{equation}
Moreover since $u*v$ is cyclically reduced then in obtaining $u*v$ from $uv$ we have the internal cancellation $a a^{-1}$ and the external cancellations $u_1^{-1} u_1$ and $s^{-1} s$.

Also we have that
	$$\rho(vu) = \rho(a^{-1} s (u*v) s^{-1} a)$$			
and there is the internal cancellation $u_1^{-1} u_1$ when obtaining $\rho(vu)$ from $vu$.

We have that $a^{-1} s (u*v)$ is a subword of $v$, thus it is reduced. By (2) of Lemma \ref{complic} there exist words $w_1, w_2, b_1$ and a natural number $n$ such that $u*v = w_1 w_2$, $\rho(a^{-1} s (u*v) s^{-1} a) = b_1 w_2 w_1 b_1^{-1}$, $w_1 \neq 1$ and $a^{-1} s = b_1 w_2 (u*v)^n$.

This implies that 
	$$s^{-1} a = (u*v)^{-n} w_2^{-1} b_1^{-1}, \hspace{0.3cm} a = \rho(s (u*v)^{-n} w_2^{-1} b_1^{-1})$$
	$$u = \rho(u_1 s (u*v)^{-n} w_2^{-1} b_1^{-1}), \hspace{0.3cm} v = b_1 w_2 (u*v)^{n+1} s^{-1} u_1^{-1}$$



We have that 
	$$v*u = \hat{\rho}(vu) = \hat{\rho}(a^{-1} s (u*v) s^{-1} a) = \hat{\rho}(b_1 w_2 w_1 b_1^{-1}) = w_2 w_1,$$
so $v*u$ is a cyclic permutation of $u*v$, proving \textit{i)}, in particular 	
	\begin{equation} \label{temp1} u*v  = \rho(w_2^{-1} (v*u) w_2).\end{equation}
Also since $\rho(vu) = b_1 w_2 w_1 b_1^{-1}$, then	
	\begin{equation} \label{temp2} v*u = \rho(b_1^{-1} vu b_1).\end{equation}
(\ref{temp1}) and (\ref{temp2}) together with (\ref{eqThCom1.1}) give the following identity among the relations
	$$(s^{-1} u_1^{-1}) u (u_1 s) \centerdot (s^{-1} u_1^{-1}) v (u_1 s) \equiv (w_2^{-1} b_1^{-1}) v (b_1 w_2)  \centerdot (w_2^{-1} b_1^{-1}) u (b_1 w_2),$$
which is equivalent to
	\begin{equation} \label{temp3} (s^{-1} u_1^{-1}) u (u_1 s) \centerdot (s^{-1} u_1^{-1}) v (u_1 s) \centerdot (w_2^{-1} b_1^{-1}) u^{-1} (b_1 w_2) \centerdot (w_2^{-1} b_1^{-1}) v^{-1} (b_1 w_2) \equiv 1.\end{equation}
This identity is cyclic permutation in the second and fourth terms\footnote{If $s = 1$ it is cyclic permutation in all terms.}, proving \textit{iii)}.

\smallskip

Now let us prove \textit{ii)}. 

Let $s \neq 1$; then $(u*v) s^{-1} a$ is reduced by Remark \ref{uvw} because: $(u*v) s^{-1}$ is a subword of $v$, so it is reduced; $s^{-1} a$ is a subword of $v^{-1}$, so it is reduced; $s^{-1} \neq 1$. This implies that $a^{-1} s (u*v) s^{-1} a$ is reduced by Remark \ref{uvw} because: $a^{-1} s (u*v)$ is a subword of $v$, so it is reduced; $(u*v) s^{-1} a$ is reduced by what seen above; $(u*v) \neq 1$ by hypothesis.

Thus if $s \neq 1$ then $\rho(vu) = a^{-1} s (u*v) s^{-1} a$, therefore $v*u = u*v$ and the cancellations made to obtain $v*u$ from $vu$ are, besides the internal cancellation $u_1^{-1} u_1$, also the external cancellations $a a^{-1}$ and $s^{-1} s$, proving \textit{ii)} for $s \neq 1$.

Now let $s = 1$; then since $u*v = w_1 w_2$, then $a = (w_1 w_2)^{-n} w_2^{-1} b_1^{-1}$, $u = u_1 (w_1 w_2)^{-n} w_2^{-1} b_1^{-1}$, so 
	$$\rho(vu) = \rho(b_1 w_2 (w_1 w_2)^{n+1} u_1^{-1} u_1 (w_1 w_2)^{-n} w_2^{-1} b_1^{-1}) =$$
	$$\rho(b_1 w_2 (w_1 w_2) w_2^{-1} b_1^{-1}) = \rho(b_1 w_2 w_1 b_1^{-1})$$
and we have the internal cancellations $u_1^{-1} u_1$, $(w_1 w_2)^n (w_1 w_2)^{-n}$ and $w_2 w_2^{-1}$.

Since by what seen above $\rho(vu) = \rho(a^{-1} s (u*v) s^{-1} a)$ and $\rho(a^{-1} s (u*v) s^{-1} a) = b_1 w_2 w_1 b_1^{-1}$, then $\rho(vu) = b_1 w_2 w_1 b_1^{-1}$, therefore when obtaining $v*u$ from $\rho(vu)$ we have the external cancellation $b_1^{-1} b_1$.

In conclusion, since $a = (w_1 w_2)^{-n} w_2^{-1} b_1^{-1}$, when obtaining $u*v$ from $uv$ we have the internal cancellations $b_1^{-1} b_1$, $w_2^{-1} w_2$ and $(w_1 w_2)^{-n} (w_1 w_2)^n$ and the external cancellation $u_1^{-1} u_1$. When obtaining $v*u$ from $vu$ we have the internal cancellations $u_1^{-1} u_1$, $(w_1 w_2)^n (w_1 w_2)^{-n}$ and $w_2 w_2^{-1}$ and the external cancellation $b_1^{-1} b_1$, proving \textit{ii)} for $s = 1$.

\smallskip


Now let us prove \textit{iv)}. The element of $H$ (see section \ref{AppA}) associated with the left hand side of (\ref{temp3}) is 
	$$h := [(s^{-1} u_1^{-1}, u), (s^{-1} u_1^{-1}, v), (w_2^{-1} b_1^{-1}, u^{-1}), (w_2^{-1} b_1^{-1}, v^{-1})].$$
We have that: 
\begin{enumerate}
	\item [--] $\rho(s^{-1} u_1^{-1} u u_1 s) = \rho(s^{-1} u_1^{-1} u_1 s (u*v)^{-n} w_2^{-1} b_1^{-1} u_1 s) =$

	$\rho((u*v)^{-n} w_2^{-1} b_1^{-1} u_1 s)$;

	\item [--] $\rho(s^{-1} u_1^{-1} v u_1 s) = \rho(s^{-1} u_1^{-1} b_1 w_2 (u*v)^{n+1} s^{-1} u_1^{-1} u_1 s) =$

	$\rho(s^{-1} u_1^{-1} b_1 w_2 (u*v)^{n+1})$;

	\item [--] $\rho(w_2^{-1} b_1^{-1} u^{-1} b_1 w_2) = \rho(w_2^{-1} b_1^{-1} b_1 w_2 (u*v)^{n} u_1^{-1} s^{-1} b_1 w_2) =$

	$\rho((u*v)^{n} u_1^{-1} s^{-1} b_1 w_2)$;

	\item [--] $\rho(w_2^{-1} b_1^{-1} v^{-1} b_1 w_2) = \rho(w_2^{-1} b_1^{-1} u_1 s (u*v)^{-n-1} w_2^{-1} b_1^{-1} b_1 w_2) =$

	$\rho(w_2^{-1} b_1^{-1} u_1 s (u*v)^{-n-1})$.
\end{enumerate}

Let us set $p := u*v$ and $q := s^{-1} u_1^{-1} b_1 w_2$. Then we have that: 
\begin{enumerate}
	\item [--] $\rho(s^{-1} u_1^{-1} u u_1 s) = p^{-n} q^{-1}$;

	\item [--] $\rho(s^{-1} u_1^{-1} v u_1 s) = q p^{n+1}$;

	\item [--] $\rho(w_2^{-1} b_1^{-1} u^{-1} b_1 w_2) = p^{n} q$;

	\item [--] $\rho(w_2^{-1} b_1^{-1} v^{-1} b_1 w_2) = q^{-1} p^{-n-1}$.
\end{enumerate}

\textit{iv)} then follows from Lemma \ref{collapseh}.

Finally, to prove \textit{v)} we observe that $\alpha$ is a prefix of $u_1$, $\beta$ is a suffix of $a$, $\beta^{-1}$ is a prefix of $a^{-1}$ and $\alpha^{-1}$ a suffix of $u_1^{-1}$, so the claim is obvious.

\medskip

\textbf{2.} We have that $u = t c_1 a$, $v = a^{-1} c_2 t^{-1}$, $u*v = c_1 c_2$ and $\rho(uv) = t c_1 c_2  t^{-1}$, with $c_1, c_2 \neq 1$, thus $u*v = \rho(t^{-1} uv t)$ and when obtaining $u*v$ from $uv$ we have the internal cancellation $a a^{-1}$ and the external cancellation $t^{-1} t$.

On the other hand 
	$$\rho(vu) = \rho(a^{-1} c_2 t^{-1} t c_1 a) = a^{-1} c_2 c_1 a,$$
since $a^{-1} c_2 c_1 a$ is reduced by Remark \ref{uvw} because $a^{-1} c_2$, $c_2 c_1$ and $c_1 a$ are reduced and $c_1, c_2 \neq 1$. Moreover when obtaining $v*u$ from $vu$ we have an internal cancellation $t^{-1} t$ and an external cancellation $a a^{-1}$, proving \textit{ii)}. Also $v*u = c_2 c_1 = \rho(a vu a^{-1})$ and is a cyclic permutation of $u*v$, proving \textit{i)}.

Since $u*v = \rho(c_1 (v*u) c_1^{-1})$, we have the following identity among relations
	$$t^{-1} u t \centerdot t^{-1}v t \equiv (c_1 a) v (a^{-1} c_1^{-1}) \centerdot (c_1 a) u (a^{-1} c_1^{-1})$$
which is equivalent to 
	$$t^{-1} u t \centerdot t^{-1} v t \centerdot (c_1 a) u^{-1} (a^{-1} c_1^{-1}) \centerdot (c_1 a) v^{-1} (a^{-1} c_1^{-1})\equiv 1.$$
This identity is cyclic permutation in the first and third terms, proving \textit{iii)}.

We have that: 
\begin{enumerate}
	\item [--] $\rho(t^{-1} u t) = \rho(t^{-1} t c_1 a t) = \rho(c_1 a t)$;

	\item [--] $\rho(t^{-1} v t) = \rho(t^{-1} a^{-1} c_2 t^{-1} t) = \rho(t^{-1} a^{-1} c_2)$;

	\item [--] $\rho(c_1 a u^{-1} a^{-1} c_1^{-1}) = \rho(c_1 a a^{-1} c_1^{-1} t^{-1} a^{-1} c_1^{-1}) = \rho(t^{-1} a^{-1} c_1^{-1})$;

	\item [--] $\rho(c_1 a v^{-1} a^{-1} c_1^{-1}) = \rho(c_1 a t c_2^{-1} a a^{-1} c_1^{-1}) = \rho(c_1 a t c_2^{-1} c_1^{-1})$.
\end{enumerate} 

Let us set $p := c_1 c_2$ and $q := t^{-1} a^{-1} c_1^{-1}$. Then we have that: 
\begin{enumerate}
	\item [--] $\rho(t^{-1} u t) = \rho(p^{-n} q^{-1})$;

	\item [--] $\rho(t^{-1} v t) = \rho(q p^{n+1})$;

	\item [--] $\rho(c_1 a u^{-1} a^{-1} c_1^{-1}) = \rho(p^{n} q)$;

	\item [--] $\rho(c_1 a v^{-1} a^{-1} c_1^{-1}) = \rho(q^{-1} p^{-n-1})$.
\end{enumerate}

\textit{iv)} then follows from Lemma \ref{collapseh}.

Finally, to prove \textit{v)} we observe that $\alpha$ is a prefix of $t$, $\beta$ is a suffix of $a$, $\beta^{-1}$ is a prefix of $a^{-1}$ and $\alpha^{-1}$ a suffix of $t^{-1}$, so the claim is obvious.

\medskip

\textbf{3.} By swapping $u$ and $v$ this case reduces to case 1, since the claims are symmetrical in $u$ and $v$.

\medskip

Now let $u$ and $v$ be non-necessarily reduced. By the previous argument we have that $\rho(u)*\rho(v)$ is a cyclic permutation of $\rho(v)*\rho(u)$. Since $u*v = \rho(u)*\rho(v)$ and $v*u = \rho(v)*\rho(u)$ by Remark \ref{u*vNonRed} then 1. follows trivially.
\end{proof}

\begin{remark} \label{vanKamp} \rm Part \textit{ii)} of Theorem \ref{puzo} proves that the van Kampen diagrams associated with $u*v$ and $v*u$ are homeomorphic with non-necessarily the same initial point, so their boundary cycles are a cyclic permutation one of the other.

Let us introduce some notions to justify this claim. A van Kampen diagram (see \cite{Short}, Ch$\ldotp$ 2) is a planar 2-CW complex that can be associated with any relator of a group presentation. A relator is the reduced form of a product of conjugates of basic relators, and with the non-reduced product of these basic relators can be associated a van Kampen diagram in the form of a ``bouquet of lollipops" (see Fig$\ldotp$ 2.3 of \cite{Short}). Each letter of that product corresponds to an edge of the van Kampen diagram and every time there is a cancellation in that product, there is a folding of the two edges corresponding to the two canceled letters. When all the cancellations have been made, the obtained word, that we call $w$, is reduced and this word is the label of the boundary of the van Kampen diagram obtained after all the foldings. That boundary forms a cycle (i.e., its initial and final points coincide) and starts with the edge labeled by the first letter of $w$, so the initial point of that edge is the initial point of the van Kampen diagram.

Now let $u$ and $v$ be two relators. In order to find the van Kampen diagram associated with the reduced product $u \cdot v$, first we find the diagram associated with the non-reduced product $uv$: this is obtained by joining the van Kampen diagrams associated with $u$ and $v$ by making coincide their initial points. Then as above for each cancellation in the word $u v$ we fold together the two edges corresponding to the two canceled letters.

Now let us return to \textit{ii)} of Theorem \ref{puzo}. Let us take a group presentation of which $u$ and $v$ are relators (non-necessarily basic) and let us consider the van Kampen diagrams associated with them. Let us consider the van Kampen diagrams associated with $uv$ and $vu$: they are homeomorphic and their boundary cycles are cyclic permutations one of the other.

When we obtain $u*v$ from $uv$ we make first all internal cancellations and then all the external ones. When we do these cancellations we fold the corresponding edges of the diagram as explained above. Since the boundary of the van Kampen diagram is a cycle, an external cancellation corresponds to two edges that are consecutive, so the folding of these edges follows the same rules as for the internal cancellations. 

Since as shown in \textit{ii)} of Theorem \ref{puzo} the cancellations in $u*v$ are the same up to cyclic permutation as those of $v*u$ and since the van Kampen diagrams for $uv$ and $vu$ are homeomorphic with boundary cycles that are cyclic permutations one of the other, the same foldings applied to the diagram of $uv$ are applied to the diagram of $vu$.

This shows that the van Kampen diagrams for $u*v$ and $v*u$ are homeomorphic and their boundary cycles are cyclic permutation one of the other.
\end{remark}

\begin{corollary} \label{cycRedPerm} Let $w$ be a word and let $w'$ be a cyclic permutation of $w$. Then $\hat{\rho}(w')$ is a cyclic permutation of $\hat{\rho}(w)$.
\end{corollary}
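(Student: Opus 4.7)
The plan is to reduce the statement directly to part \textit{i)} of Theorem \ref{puzo}. Since $w'$ is a cyclic permutation of $w$, by Definition \ref{cycPermDef} there exist words $w_1, w_2$ with $w = w_1 w_2$ and $w' = w_2 w_1$. By Definition \ref{key} we then have
\[
\hat{\rho}(w) = \hat{\rho}(w_1 w_2) = w_1 * w_2 \quad \text{and} \quad \hat{\rho}(w') = \hat{\rho}(w_2 w_1) = w_2 * w_1.
\]
So the claim becomes: $w_2 * w_1$ is a cyclic permutation of $w_1 * w_2$.

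First I would dispose of the degenerate case $w_1 * w_2 = 1$. By item (2) of Remark \ref{barrho=1}, $\hat{\rho}(w_1 w_2) = 1$ is equivalent to $\rho(w_1 w_2) = 1$. Using Remark \ref{obviRho} this is in turn equivalent to $\rho(\rho(w_1) \rho(w_2)) = 1$, i.e.\ $\rho(w_2) = \rho(w_1)^{-1}$ in $\mathcal{F}(X)$. But then $\rho(w_2 w_1) = \rho(\rho(w_2)\rho(w_1)) = 1$, so $\hat{\rho}(w_2 w_1) = 1$ as well, again by Remark \ref{barrho=1}. Hence $w_1 * w_2 = w_2 * w_1 = 1$ and the conclusion holds trivially (the empty word is a cyclic permutation of itself).

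In the remaining case $w_1 * w_2 \neq 1$, the hypothesis of Theorem \ref{puzo} is satisfied with $u := w_1$ and $v := w_2$, and its part \textit{i)} yields that $w_1 * w_2$ is a cyclic permutation of $w_2 * w_1$. Since by Definition \ref{cycPermDef} the relation $\sim$ is an equivalence (in particular symmetric), this is the same as saying $\hat{\rho}(w')$ is a cyclic permutation of $\hat{\rho}(w)$, which is what we wanted to prove. There is no genuine obstacle here: the whole content lives in Theorem \ref{puzo}\textit{i)}, and the only care needed is to separate off the case where the cyclically reduced form vanishes, because that hypothesis is explicitly required in the theorem.
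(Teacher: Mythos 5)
Your proof is correct and follows essentially the same route as the paper: decompose $w = w_1 w_2$, $w' = w_2 w_1$, rewrite the cyclically reduced forms as $w_1 * w_2$ and $w_2 * w_1$, and invoke Theorem \ref{puzo}\textit{i)}. In fact you are slightly more careful than the paper's own proof, which silently skips the degenerate case $w_1 * w_2 = 1$ even though Theorem \ref{puzo} explicitly assumes $u*v \neq 1$; your separate treatment of that case is a genuine (if small) improvement.
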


\begin{proof} Since $w'$ is a cyclic permutation of $w$, there exist words $u, v$ such that $w = u v$ and $w' = v u$. Therefore $\hat{\rho}(w) = \hat{\rho}(uv) = u*v$ and $\hat{\rho}(w') = \hat{\rho}(vu) = v*u$ and by Theorem \ref{puzo} $\hat{\rho}(w')$ is a cyclic permutation of $\hat{\rho}(w)$.
\end{proof}

The next corollary implies one side of the equivalence that gives a solution to the conjugacy problem in free groups (Remark \ref{permCon2} implies the other side).

\begin{corollary} \label{permCycRedForm} If $t, w$ are words then $\hat{\rho}(t w t^{-1})$ is a cyclic permutation of $\hat{\rho}(w)$. If moreover $\rho(t)\rho(w)\rho(t)^{-1}$ is reduced then $\hat{\rho}(t w t^{-1}) =\hat{\rho}(w)$.
\end{corollary}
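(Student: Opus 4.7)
The plan is to get both halves of the corollary essentially for free from results already established. For the first part, I would observe that $twt^{-1}$ and $t^{-1}tw$ are cyclic permutations of each other in $\mathcal{M}(X\cup X^{-1})$, so by Corollary \ref{cycRedPerm} their cyclically reduced forms are cyclic permutations of each other. It then suffices to identify $\hat\rho(t^{-1}tw)$ with $\hat\rho(w)$. This is immediate: $\rho(t^{-1}tw)=\rho(w)$ because the obvious cancellation reduces $t^{-1}t$, and then by Remark \ref{yaObv} we get $\hat\rho(t^{-1}tw)=\hat\rho(\rho(t^{-1}tw))=\hat\rho(\rho(w))=\hat\rho(w)$. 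Combining gives that $\hat\rho(twt^{-1})$ is a cyclic permutation of $\hat\rho(w)$.

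For the second part, the strategy is to exhibit a decomposition of $\rho(twt^{-1})$ of the form $s'\hat\rho(w)(s')^{-1}$ with $s'$ reduced, and then invoke the uniqueness of such a decomposition from Remark \ref{scope}. First, since $\rho(twt^{-1})=\rho(\rho(t)\rho(w)\rho(t)^{-1})$ by Remark \ref{obviRho} and $\rho(t)\rho(w)\rho(t)^{-1}$ is reduced by hypothesis, we have $\rho(twt^{-1})=\rho(t)\rho(w)\rho(t)^{-1}$. Next, by Remark \ref{scope} applied to $w$ itself, there exists a (unique) $s\in\mathcal F(X)$ such that $\rho(w)=s\hat\rho(w)s^{-1}$. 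Substituting gives
\[
\rho(twt^{-1})=\rho(t)\,s\,\hat\rho(w)\,s^{-1}\,\rho(t)^{-1}=(\rho(t)s)\,\hat\rho(w)\,(\rho(t)s)^{-1},
\]
and since the left-hand side is reduced, its subword $\rho(t)s$ is reduced as well. The uniqueness assertion in Remark \ref{scope}, together with the fact that $\hat\rho(w)$ is cyclically reduced, then forces $\hat\rho(twt^{-1})=\hat\rho(w)$.

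I do not anticipate a genuine obstacle; the only mild subtlety is making sure the factorization above is being read in $\mathcal{M}(X\cup X^{-1})$ (i.e.\ as concatenation) so that the uniqueness from Remark \ref{scope} applies and delivers $\hat\rho(w)$ directly, rather than some cyclic permutation of it. The hypothesis that $\rho(t)\rho(w)\rho(t)^{-1}$ is reduced is used exactly once, and exactly for this purpose: without it, the $s$ built from $\rho(w)$ might not concatenate with $\rho(t)$ to form a reduced word, and we would only recover the weaker cyclic-permutation conclusion of the first part.
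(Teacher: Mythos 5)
Your proof is correct and follows essentially the same route as the paper: the first part applies Corollary \ref{cycRedPerm} to a cyclic permutation of $twt^{-1}$ in which the conjugator cancels trivially (you use $t^{-1}tw$, the paper uses $wtt^{-1}$ — an immaterial difference), and the second part reduces to the uniqueness of the decomposition $\rho(\cdot)=s\,\hat\rho(\cdot)\,s^{-1}$ from Remark \ref{scope}. If anything, your second half spells out the step $\hat\rho\big(\rho(t)\rho(w)\rho(t)^{-1}\big)=\hat\rho(w)$ in more detail than the paper does.
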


\begin{proof} By Corollary \ref{cycRedPerm} we have that $\hat{\rho}(t w t^{-1})$ is a cyclic permutation of $\hat{\rho}(w t t^{-1})$ and by Remark \ref{redCycRed} we have that $\hat{\rho}(w t t^{-1}) = \hat{\rho}(w)$, which proves the first part of the claim. 
	
Now let $\rho(t)\rho(w)\rho(t)^{-1}$ be reduced. Then $\hat{\rho}(t w t^{-1}) = \hat{\rho}(\rho(t)\rho(w)\rho(t)^{-1}) = \hat{\rho}(\rho(w)) = \hat{\rho}(w)$.
\end{proof}

We observe that Corollary \ref{permCycRedForm} cannot be improved, since in general $\hat{\rho}(t w t^{-1})$ is not equal to $\hat{\rho}(w)$, even when $t$ and $w$ are reduced words. Indeed let $x, y \in X$ and let $t := x$ and $w := yx$. Then $\hat{\rho}(t w t^{-1}) = \hat{\rho}(x y x x^{-1}) = xy \neq yx = \hat{\rho}(w)$.

\medskip

Let $w$ be a word and let $a, w_1, w_2$ be words such that $w = w_1 a a^{-1} w_2$; we say that the word $w' := w_1 w_2$ \textit{is obtained from $w$ by internally canceling $a a^{-1}$}. Let $t, w_0$ be words such that $w = t w_0 t^{-1}$; we say that the word $w_0$ \textit{is obtained from $w$ by externally canceling $t^{-1} t$}. 

We consider the transitive closure of the relation ``being obtained from", that is if $w, w', w''$ are words such that $w'$ is obtained from $w$ by (internally or externally) canceling word $b b^{-1}$ and $w''$ is obtained from $w'$ by canceling word $c c^{-1}$ then we say that \textit{$w''$ is obtained from $w$ by canceling $b b^{-1}$ and $c c^{-1}$}.

It is well known (see \cite{MKS}, Theorem 1.2 of Chapter 1) that given $w \in \mathcal{M}(X \cup X^{-1})$ then $\rho(w)$ is obtained from $w$ by performing all internal cancellations in any order. We have seen in Remark \ref{scope} that $\hat{\rho}(w)$ is obtained from $\rho(w)$ by performing all the external cancellations. So in order to obtain $\hat{\rho}(w)$ from $w$ first we have to carry out all the internal cancellations and afterwards all the external cancellations.

Now the question is: which word do we obtain from $w$ if we do not respect this order and we perform some external cancellation before all internal cancellations have been carried out? Does this word still is $\hat{\rho}(w)$? The answer is that the word obtained is a cyclic permutation of $\hat{\rho}(w)$ and the next theorem proves it.

\begin{theorem} \label{ordCanc} Let $w$ be a word and let $v$ be a cyclically reduced word obtained from $w$ by performing internal and external cancellations in any order. Then $v$ is a cyclic permutation of $\hat{\rho}(w)$. 
\end{theorem}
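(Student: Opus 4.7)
The plan is to proceed by induction on the number $k$ of cancellation steps used to pass from $w$ to $v$. The base case $k=0$ is trivial: $w$ itself is then cyclically reduced, so $v = w = \hat{\rho}(w)$, which is certainly a cyclic permutation of $\hat{\rho}(w)$.

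For the inductive step, let $w'$ denote the word obtained after performing the very first cancellation in the given sequence, so that $v$ is obtained from $w'$ by a sequence of $k-1$ cancellations. By the inductive hypothesis, $v$ is a cyclic permutation of $\hat{\rho}(w')$. Since being a cyclic permutation is an equivalence relation (Definition \ref{cycPermDef}), it suffices to show that $\hat{\rho}(w')$ is a cyclic permutation of $\hat{\rho}(w)$.

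I would split according to the type of the first cancellation. If it is internal, say $w = w_1 a a^{-1} w_2$ and $w' = w_1 w_2$, then the uniqueness of the reduced form (Remark \ref{obviRho}) gives $\rho(w) = \rho(w_1 a a^{-1} w_2) = \rho(w_1 w_2) = \rho(w')$, whence $\hat{\rho}(w) = \hat{\rho}(w')$ by Remark \ref{yaObv}. If the first cancellation is external, say $w = t w_0 t^{-1}$ and $w' = w_0$, then Corollary \ref{permCycRedForm} applies directly and asserts that $\hat{\rho}(w) = \hat{\rho}(t w_0 t^{-1})$ is a cyclic permutation of $\hat{\rho}(w_0) = \hat{\rho}(w')$. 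In either case $\hat{\rho}(w') \sim \hat{\rho}(w)$, completing the induction by transitivity of $\sim$.

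I do not foresee any real obstacle: the substantive content has already been isolated in Corollary \ref{permCycRedForm} (which in turn rests on Theorem \ref{puzo}), and the present argument only has to bookkeep that each elementary step either preserves $\hat{\rho}$ exactly (internal case) or alters it by at most a cyclic permutation (external case). The only minor point to state cleanly is that a trivial step in which the cancelled word is empty leaves $w$ unchanged and so can simply be skipped in the induction, but this costs nothing.
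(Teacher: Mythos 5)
Your proof is correct and takes essentially the same route as the paper: the paper walks along the sequence of intermediate words $u_0 = w, u_1, \dots, u_n = v$ and observes that each internal cancellation preserves the reduced form (hence the cyclically reduced form, by Remark \ref{redCycRed}) while each external cancellation changes $\hat{\rho}$ by at most a cyclic permutation via Corollary \ref{permCycRedForm}, exactly the two cases of your inductive step. Your phrasing as an explicit induction with an appeal to transitivity of $\sim$ is just a tidier packaging of the same argument.
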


\begin{proof} Let $u_0 := w, u_1, \dots, u_n := v$ be the sequence of words obtained from $w$ to $q$ by performing internal and external cancellations. 
	
If we perform an internal cancellation we obtain a word with the same reduced form so by Remark \ref{redCycRed} with the same cyclically reduced form. If we perform an external cancellation then by Corollary \ref{permCycRedForm} we obtain a word whose cyclically reduced form is a cyclic permutation. So when going from $u_i$ to $u_{i+1}$ the cyclically reduced form either stays equal or permutes cyclically and this proves the claim.
\end{proof}	

Let us give an interpretation of Theorem \ref{ordCanc} in light of the graph representation of a word given at page \pageref{grafRep}. Let $w$ be a word and let us represent it as a cycle. Let us consider two sequences of cancellations that modify $w$ to two cyclically reduced words $w'$ and $w''$. Then the graph representations of $w'$ and $w''$ are the same possibly with different initial points.

\medskip

Let us give an example when the process described in Theorem \ref{ordCanc} determines a word which is a non-trivial cyclic permutation. Indeed let $w := x x^{-1} y x^{-1}$; if we perform first the external cancellation we obtain the word $x^{-1} y$ which is a non-trivial cyclic permutation of $\hat{\rho}(w) = y x^{-1}$.

\appendix

\section{Identities among relations} \label{AppA}

This section deals with identities among relations. The material until Remark \ref{iarPeiffCons} is already known (see for example \cite{BH}) and is presented here in order to fix the terminology and the notation.

This section is independent from the rest of the paper, except for the notations of the free group on $X$ (denoted $\mathcal{F}(X)$) and of the reduced form of a word ($\rho(w)$ denotes the reduced form of a non-necessarily reduced word $w$) from Section \ref{S1}.

\medskip

Let $a_1, \dots, a_m, r_1, \dots, r_m, b_1, \dots, b_n, s_1, \dots, s_n$ be words such that the equality 
	\begin{equation} \label{idamre0} \rho(a_1 r_1 a_1^{-1} \dots a_m r_m a_m^{-1}) = \rho(b_1 s_1 b_1^{-1} \dots b_n s_n b_n^{-1})\end{equation} 
holds. Then we say that we have an \textit{identity among relations involving $r_1, \dots, r_m, s_1^{-1}, \dots, s_n^{-1}$} that we denote 
	\begin{equation} \label{idamre2} a_1 r_1 a_1^{-1} \centerdot \dots \centerdot a_m r_m a_m^{-1} \equiv b_1 s_1 b_1^{-1} \centerdot \dots \centerdot b_n s_n b_n^{-1}.	\end{equation} 
If $n = 0$, that is the right hand side is 1, then we say that the identity is in \textit{normal form}.

Let us suppose that in the identity (\ref{idamre2}) for some $i \in \{1, \dots, m\}$ we have that $\rho(a_i)$ is a suffix of $\rho(r_i)$ or $\rho(a_i^{-1})$ is a prefix of $\rho(r_i)$. Then we say that (\ref{idamre0}) is \textit{cyclic permutation in the $i$-th left term}. Analogously we define when an identity is cyclic permutation in a term on the right-hand side.

\medskip

Identities among relations are special types of word equations. They arise in the context of group presentations, but we will use them without involving an explicit group presentation. In particular an identity among relations involving $r_1, \dots, r_m$ is an identity among relations for any group presentation having $r_1, \dots, r_m$ as relators. The last claim is obvious if the $r_i$ are basic relators. If some of the $r_i$ are non-basic relators, then the claim follows from Remark \ref{iarRepl}.

\begin{remark} \label{iarRepl} \rm Let us suppose that (\ref{idamre2}) holds and that for some $i$ we have that the reduced form of $r_i$ is equal to the reduced form of $c_1 t_1 c_1^{-1} \dots c_k t_k c_k^{-1}$ for some words $c_1, t_1, \dots c_k, t_k$. Then by replacing in (\ref{idamre2}) the term $a_i r_i a_i^{-1}$ with $d_1 t_1 d_1^{-1} \centerdot \dots \centerdot d_k t_k d_k^{-1}$, where $d_j = a_i c_j$, we obtain an identity among relations involving $s_1, \dots, s_n, t_1, \dots, t_k$ and all the $r_h$ except for $h = i$.
\end{remark}

\begin{definition} \label{}  \rm We say that the identities 
	$$a_2 r_2 a_2^{-1} \centerdot \dots \centerdot a_m r_m a_m^{-1} \equiv a_1 r_1^{-1} a_1^{-1} \centerdot b_1 s_1 b_1^{-1} \centerdot \dots \centerdot b_n s_n b_n^{-1},$$
	$$a_1 r_1 a_1^{-1} \centerdot \dots \centerdot a_{m-1} r_{m-1} a_{m-1}^{-1} \equiv b_1 s_1 b_1^{-1} \centerdot \dots \centerdot b_n s_n b_n^{-1} \centerdot a_m r_m a_m^{-1},$$
	$$b_1 s_1^{-1} b_1^{-1} \centerdot a_1 r_1 a_1^{-1} \centerdot \dots \centerdot a_m r_m a_m^{-1} \equiv b_2 s_2 b_2^{-1} \centerdot \dots \centerdot b_n s_n b_n^{-1}$$
and
	$$a_1 r_1 a_1^{-1} \centerdot \dots \centerdot a_m r_m a_m^{-1} \centerdot b_n s_n b_n^{-1} \equiv b_1 s_1 b_1^{-1} \centerdot \dots \centerdot b_{n-1} s_{n-1} b_{n-1}^{-1}$$
are \textit{1-step equivalent to (\ref{idamre2}}). 

We say that an identity $\iota$ is \textit{equivalent} to an identity $\iota'$ if there exist identities $\iota_1, \dots, \iota_n$ such that $\iota$ is 1-step equivalent to $\iota_1$, $\iota_i$ is 1-step equivalent to $\iota_{i+1}$ for $i \in \{1, \dots, n-1\}$ and $\iota_n$ is 1-step equivalent to $\iota'$.
\end{definition}


Let $\langle \, X \, | \, R \, \rangle$ be a presentation for a group $G$, with $X$ the set of generators and $R$ that of basic relators. We will assume without loss of generality that $R$ contains the inverse of any of its elements and the reduced form of the cyclic permutations of any of its elements. If $r_1, \dots, r_n \in R$ are such that the identity in normal form
	\begin{equation} \label{idamre} a_1 r_1 a_1^{-1} \centerdot \dots \centerdot a_n r_n a_n^{-1} \equiv 1	\end{equation}
holds, then (\ref{idamre}) determines a product of conjugates of basic relators equal to 1 not only in $G$ but also in $\mathcal{F}(X)$ (we recall that $G$ is a quotient of $\mathcal{F}(X)$).

\medskip

\label{H}

In order to formalize these notions we introduce some definitions (we will follow \cite{BH}). Let us set $Y := \mathcal{F}(X) \times R$, let us define the inverse of an element $(a, r) \in Y$ as $(a, r^{-1})$ and let us denote $H$ the free monoid on $Y \cup Y^{-1}$. $H$ is the set of finite sequences of elements of $Y$. We denote an element of $H$ as $[(a_1, r_1), \dots, (a_n, r_n)]$, where $a_i \in \mathcal{F}(X)$ and $r_i \in R$. The trivial element of $H$ is the sequence with zero elements.

Let $h := [(a_1, r_1), \dots, (a_n, r_n)] \in H$ and let $(a, r)$, $(b, s)$ be two consecutive elements $(a_i, r_i), (a_{i+1}, r_{i+1})$ for some $i \in \{1, \dots, n - 1\}$, in particular $a = a_i$, $r = r_i$, $b = a_{i+1}$, $s = r_{i+1}$. We define the following transformations on $h$ that change it to another element of $H$:
\begin{itemize}
	\item [--] a \textit{deletion} deletes in $h$ the elements $(a, r)$, $(b, s)$ if $\rho(a r a^{-1} b s b^{-1}) = 1$;

	\item [--] a \textit{semi-Peiffer deletion} is a deletion where $r^{-1} = s$;

	\item [--] a \textit{Peiffer deletion} is a semi-Peiffer deletion where $a = b$;


	\item [--] an \textit{exchange} replaces in $h$ the elements $(a, r)$, $(b, s)$  either with the pair
		$$(b, s), (\rho(b s^{-1} b^{-1} a), r)$$
	(we call it an \textit{exchange of type A at the $i$-th position} or \textit{exchange of type A-$i$}) or with the pair
		$$(\rho(a r a^{-1} b), s), (a, r)$$
	(we call it an \textit{exchange of type B at the $i$-th position} or \textit{exchange of type B-$i$})).	
\end{itemize}
Deletions and exchanges leave unchanged the $(a_j, r_j)$ for $j \neq i, i+1$.




\begin{remark} \label{} \rm We say that $R$ is \textit{irredundant} if any two elements of $R$ are not conjugate, i.e., for every $r, s \in R$ such that $r \neq s$ there does not exist $a \in \mathcal{F}(X)$ such that $r = \rho(a s a^{-1})$.

By the Lemma at page 174 of \cite{BH} we have that if $R$ is irredundant then a deletion is a semi-Peiffer deletion, that is if $\rho(a r a^{-1} b s b^{-1}) = 1$ for some $(a, r), (b, s) \in Y$ then $r^{-1} = s$.
\end{remark}

\begin{remark} \label{} \rm We say that $R$ is \textit{primary} if no element of $R$ is a proper power, i.e., for every $r \in R$ there does not exist $z \in \mathcal{F}(X)$ and an integer $m$ different than 1 such that $r = \rho(z^m)$.

By Proposition 13 of \cite{BH} we have that if $R$ is irredundant and primary then a deletion is equivalent to some sequence of exchanges plus a Peiffer deletion. In particular if $\rho(a r a^{-1} b s b^{-1}) = 1$ for some $(a, r), (b, s) \in Y$ then by applying some specific exchanges we can transform the pair $(a, r), (b, s)$ into a pair $(c, t), (c, t^{-1})$.
\end{remark}

Given two elements $h_1, h_2 \in H$, we say that \textit{$h_1$ ((semi-)Peiffer) collapses to $h_2$} if $h_2$ can be obtained from $h_1$ by applying ((semi-)Peiffer) deletions and exchanges. 

There is a bijection $\chi$ between $H$ and the set of products of conjugates of elements of $R$ given by associating the element $h = [(a_1, r_1), \dots, (a_n, r_n)] \in H$ with the following product of conjugates of elements of $R$,
	$$a_1 r_1 a_1^{-1} \centerdot \dots \centerdot a_n r_n a_n^{-1}.$$
Also we define a monoid homomorphism $\psi$ from $H$ to $\mathcal{F}(X)$ by $\psi(h) := \rho(a_1 r_1 a_1^{-1} \dots a_n r_n a_n^{-1})$. If $\psi(h) = 1$, that is if $h$ belongs to the kernel of $\psi$, then we say that $h$ determines the identity among relations in normal form (\ref{idamre}). We say that this identity among relations \textit{((semi-)Peiffer) collapses to 1} if $h$ ((semi-)Peiffer) collapses to the trivial element of $H$.

The restriction of $\chi$ to the kernel of $\psi$ determines a bijection with the set of identities among relations in normal form involving elements of $R$.

\begin{remark} \label{iarPeiffCons} \rm We have seen in the introduction to this section that if $r_1$, $\dots$, $r_n$ are relators of a group presentation $\mathcal{P} := \langle \, X \, | \, S \, \rangle$ then an identity among relations involving $r_1, \dots, r_n$ determines an identity among relations for $\mathcal{P}$, that is an identity involving the basic relators of $\mathcal{P}$.
	
By virtue of the Corollary at page 159 of \cite{BH} we have also that if the identity involving $r_1, \dots, r_n$ Peiffer collapses to 1 then also the identity involving basic relators determined by it Peiffer collapses to 1.
\end{remark}

\begin{remark} \label{phi(h)} \rm For $(a, r) \in \mathcal{F}(X) \times R$ we define $\phi\big((a, r)\big) := \big(\rho(a r a^{-1}), r\big)$ and if $h = [(a_1, r_1), \dots, (a_n, r_n)] \in H$ we define 
	$$\phi(h) : = [\phi\big((a_1, r_1)\big), \dots, \phi\big((a_n, r_n)\big)].$$
We observe that $\phi(H) \subset H$. We will usually denote $\phi(h)$ in the following way
	$$\begin{pmatrix}
		\rho(a_1 r_1 a_1^{-1}) \centerdot  & \dots \centerdot & \rho(a_n r_n a_n^{-1})\\
		r_1 	& \dots	& r_n
		\end{pmatrix}.$$

If $\phi\big((a, r)\big) = (\alpha, r)$ and $\phi\big((b, s)\big) = (\beta, s)$ then an exchange of type A replaces in $\phi(h)$ the pair $(\alpha, r), (\beta, s)$ with the pair $(\beta, s), (\beta^{-1} \alpha \beta, r)$; an exchange of type B replaces in $\phi(h)$ the pair $(\alpha, r), (\beta, s)$ with the pair $(\alpha \beta \alpha^{-1}, s), (\alpha, r)$.

By defining exchanges in the above way for the elements of $\phi(H)$ and deletions in the same way as for $H$, we can define the notion of (semi-Peiffer) collapse for the elements of $\phi(H)$. It is easy to see that if $h_1, h_2 \in H$ then $h_1$ (semi-Peiffer) collapses to $h_2$ by means of a certain sequence of operations if and only if $\phi(h_1)$ collapses to $\phi(h_2)$ by means of the same sequence of operations.
\end{remark}

\begin{remark} \label{pcrFrom} \rm Let $R$ be a set of reduced words, that is $R \subset \mathcal{F}(X)$. Let us consider the following operations on the elements of $\mathcal{F}(X)$: reduced product, cyclically reduced product, cyclically reduced form, conjugations, reduced form of cyclic permutations. 

Let $\mathcal{N}$ be the normal closure of $R$ in $\mathcal{F}(X)$; then $\mathcal{N}$ is the subset of $\mathcal{F}(X)$ generated by $R$ and by the above operations. Indeed cyclic permutations and the cyclically reduced form are special cases of conjugations and the cyclically reduced product is obtained by composing the cyclically reduced form with the reduced product.

Let $\sigma$ be a sequence of the above listed operations on the elements of $R$ and let $u \in \mathcal{N}$ be the result of $\sigma$. We will show how to associate with $\sigma$ an element $[(a_1, r_1), \cdots, (a_n, r_n)]$ of $H$ with the property that
	$$\rho(a_1 r_1 a_1^{-1} \cdots a_n r_n a_n^{-1}) = u.$$

\smallskip

- Let us take a sequence of length one. This is an element $r$ of $R$ and we associate with it the element $[(1, r)]$ of $H$. 

We can suppose by induction hypothesis that there is a natural number $k$ such that for each sequence $\sigma$ of length less than $k$ we have associated with $\sigma$ an element of $H$ with the properties specified above.

- Let us given sequences $\sigma, \sigma'$ of length less than $k$ with results respectively $u$ and $u'$. Then by induction hypothesis there exist $r_1, \cdots, r_m$, $s_1, \cdots, s_n \in R$ and $a_1, \cdots, a_m$, $b_1, \cdots, b_n \in \mathcal{F}(X)$ such that we have associated with $\sigma$ an element $[(a_1, r_1), \cdots, (a_m, r_m)] \in H$ such that 
	$$u = \rho(a_1 r_1 a_1^{-1} \cdots a_m r_m a_m^{-1})$$ 
and with $\sigma'$ an element $[(b_1, s_1), \cdots, (b_n, s_n)] \in H$ such that 
	$$u' =\rho(b_1 s_1 b_1^{-1} \cdots b_n s_n b_n^{-1}).$$
Let us consider the sequence $\tau$ having all the operations of $\sigma$ and $\sigma'$ plus the reduced product of $u$ by $u'$. Then we associate with $\tau$ the element
	$$[(a_1, r_1), \cdots, (a_m, r_m), (b_1, s_1), \cdots, (b_n, s_n)] \in H;$$
obviously $\rho(a_1 r_1 a_1^{-1} \cdots a_m r_m a_m^{-1} b_1 s_1 b_1^{-1} \cdots b_n s_n b_n^{-1}) = \rho(u u')$.

- Now let us consider a sequence $\sigma_1$ having all the operations of $\sigma$ plus the conjugation of $u$ by a word $b$. Then we associate with $\sigma_1$ the element $[(c_1, r_1), \cdots, (c_m, r_m)] \in H$ where $c_i = \rho(b a_i)$. Obviously 
	$$\rho(c_1 r_1 c_1^{-1} \cdots c_m r_m c_m^{-1}) = \rho(b u b^{-1}).$$

- Now let us consider a sequence $\sigma_2$ having all the operations of $\sigma$ plus the cyclically reduced form of $u$. The previous cases show how to associate with $\sigma_2$ an element of $H$ with the above properties because by virtue of Remark (\ref{scope}) the cyclically reduced form is a special case of conjugation.

-Now let us consider a sequence $\sigma_3$ having all the operations of $\sigma$ plus the reduced form of a cyclic permutation of $u$. This means that there exist words $u_1, u_2$ such that $u = u_1 u_2$ and that the last operation of $\sigma_3$ is the conjugation of $u$ by either $u_2$ or by $u_1^{-1}$. This implies that we associate with $\sigma_3$ the element $[(c_1, r_1), \cdots, (c_m, r_m)] \in H$ where $c_i$ for $i = 1, \cdots, m$ can be either equal to $\rho(u_2 a_i)$ or to $\rho(u_1^{-1} a_i)$.

- Finally if $\tau$ is the sequence having all the operations of $\sigma$ and $\sigma'$ plus the cyclically reduced product of $u$ by $u'$, then the previous cases show how to associate with $\tau$ an element of $H$ with the properties stated above because the cyclically reduced product is the composition of the reduced product with the cyclically reduced form.
\end{remark}

\begin{remark} \label{seqFromProd} \rm We show how to associate with a product of conjugates of elements of $R$ a sequence of operations on $R$ as described in Remark \ref{pcrFrom}.

Indeed with $a_1 r_1 a_1^{-1} \centerdot \dots \centerdot a_m r_m a_m^{-1}$ we associate the following sequence: conjugation of $r_1$ with $a_1$; conjugation of $r_2$ with $a_2$; $\dots$; conjugation of $r_m$ with $a_m$; reduced product of $a_1 r_1 a_1^{-1}$ by $a_2 r_2 a_2^{-1}$; reduced product of $a_1 r_1 a_1^{-1} a_2 r_2 a_2^{-1}$ by $a_3 r_3 a_3^{-1}$; $\dots$; reduced product of $a_1 r_1 a_1^{-1} \dots  a_{m-1} r_{m-1} a_{m-1}^{-1}$ by $a_m r_m a_m^{-1}$.

In particular, given words $u$ and $v$, we associate with $u*v$ the product $\alpha u \alpha^{-1} \centerdot \alpha v \alpha^{-1}$, where $\alpha$ is such that $u*v = \rho(\alpha u v \alpha^{-1})$ (see Remark \ref{scope}). \end{remark}

\begin{remark} \label{idFrom} \rm Let $u, u' \in \mathcal{N}$ be obtained respectively from sequences $\sigma$ and $\sigma'$ of operations on $R$ as described in Remark \ref{pcrFrom}, in particular in view of Remark \ref{seqFromProd} let $u, u'$ be the reduced forms of products of conjugates of elements of $R$. Let us suppose that $u \sim u'$; we show how to associate with $\sigma$, $\sigma'$ and the equivalence $u \sim u'$ an identity among relations involving elements of $R$.	
	
Indeed the procedure described in Remark \ref{pcrFrom} associates with $\sigma$ and $\sigma'$ elements $h := [(a_1, r_1), \dots, (a_m, r_m)]$ and $h' := [(b_1, s_1), \dots, (b_n, s_n)]$ of $H$ such that $\rho(a_1 r_1 a_1^{-1} \dots a_m r_m a_m^{-1}) = u$ and $\rho(b_1 s_1 b_1^{-1} \dots b_n s_n b_n^{-1}) = u'$.

If $u \sim v$ then $u$ and $v$ are conjugates and thus there exists a word $c$ such that $u = \rho(c v c^{-1})$. We associate with $\sigma$, $\sigma'$ and the equivalence $u \sim v$ the following identity among relations 
	$$a_1 r_1 a_1^{-1} \centerdot \dots \centerdot a_m r_m a_m^{-1} \equiv d_1 s_1 d_1^{-1} \centerdot \dots \centerdot d_n s_n d_n^{-1}$$
where $d_i = c b_i$.	
\end{remark}

The next lemma will be used in Section \ref{S3}

\begin{lemma} \label{collapseh} Let $u, v \in R$ and $\alpha, \beta, \gamma, \delta \in \mathcal{F}(X)$ and let us consider the element $h := [(\alpha, u), (\beta, v), (\gamma, u^{-1}), (\delta, v^{-1})] \in H$.
	
We suppose that there exist words $p, q$ and a natural number $n$ such that $\rho(\alpha u \alpha^{-1}) = \rho(p^{-n} q^{-1})$, $\rho(\beta v \beta^{-1}) = \rho(q p^{n+1})$, $\rho(\gamma u^{-1} \gamma^{-1}) = \rho(p^{n} q)$ and $\rho(\delta v^{-1} \delta^{-1}) = \rho(q^{-1} p^{-n-1})$. 
	
Then $h$ semi-Peiffer collapses to 1 by means of the following sequence of $2n+3$ operations: $n$ exchanges of type B-1; $n$ exchanges of type B-3;\footnote{the $n$ exchanges of type B-1 and those of type B-3 can be made in any order since an exchange of type B-1 commutes with one of type B-3} then an exchange of type A-2; finally two semi-Peiffer deletions.	
\end{lemma}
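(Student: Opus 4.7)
The plan is to work at the level of $\phi(h)$, as allowed by Remark~\ref{phi(h)}: both the exchanges and the semi-Peiffer deletions are well-defined there, and a collapse in $\phi(h)$ lifts to one of $h$. It then suffices to show that after the prescribed sequence of exchanges, the four $\phi$-entries split into two consecutive pairs whose relators are mutually inverse and whose $\phi$-conjugators multiply to $1$ in $\mathcal{F}(X)$ --- precisely the condition that enables two semi-Peiffer deletions to empty the sequence.

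First we would set up the recurrences. Let $X_k, X'_k$ denote the $\phi$-conjugators at positions $1, 2$ after $k$ applications of B-1, and $Y_k, Y'_k$ those at positions $3, 4$ after $k$ applications of B-3. The $\phi$-form of the B-exchange (Remark~\ref{phi(h)}) gives
	$$X_{k+1} = X_k X'_k X_k^{-1}, \quad X'_{k+1} = X_k, \quad Y_{k+1} = Y_k Y'_k Y_k^{-1}, \quad Y'_{k+1} = Y_k,$$
with initial values $X_0 = p^{-n}q^{-1}$, $X'_0 = q p^{n+1}$, $Y_0 = p^n q$, $Y'_0 = q^{-1} p^{-n-1}$ supplied by the hypotheses. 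An induction on $k$ yields the closed forms
	$$X_{2j} = p^{j-n} q^{-1} p^{-j}, \quad X_{2j+1} = p^{j+1} q p^{n-j}, \quad Y_{2j} = p^{n-j} q p^j, \quad Y_{2j+1} = p^{-j-1} q^{-1} p^{j-n},$$
from which a short case split on the parity of $n$ establishes the two algebraic identities $X_n Y_n = 1$ and $Y_n^{-1} X_{n-1} Y_n = Y_{n-1}^{-1}$.

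Next we apply A-2 and track the data. Each B-1 swaps the relators at positions $1, 2$ and each B-3 swaps those at positions $3, 4$; the A-2 exchange then swaps positions $2, 3$. Regardless of the parity of $n$, the final relators at positions $1, 2$ are mutually inverse and so are those at positions $3, 4$. The $\phi$-conjugators end up as $X_n$ at position $1$, $Y_n$ at position $2$, $Y_n^{-1} X_{n-1} Y_n$ at position $3$, and $Y_{n-1}$ at position $4$. The semi-Peiffer deletion between positions $1, 2$ thus reduces to the identity $X_n Y_n = 1$, while the one between positions $3, 4$ reduces to $(Y_n^{-1} X_{n-1} Y_n) Y_{n-1} = 1$, equivalent to $Y_n^{-1} X_{n-1} Y_n = Y_{n-1}^{-1}$. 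Both were verified in the previous paragraph, so the two deletions empty $h$.

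The hard part will be the bookkeeping: handling the parity-dependent alternation of relators through the recurrences and checking that the explicit formulas telescope correctly to give $X_n Y_n = 1$ and $Y_n^{-1} X_{n-1} Y_n = Y_{n-1}^{-1}$ in both parities of $n$. These are straightforward computations once the closed forms are established; no deeper structural argument appears to be required.
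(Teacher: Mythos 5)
Your proposal is correct and follows essentially the same route as the paper: both work in $\phi(H)$, track the conjugators through the B-exchanges by the recurrence $X_{k+1}=X_kX'_kX_k^{-1}$ (your closed forms $X_{2j},X_{2j+1},Y_{2j},Y_{2j+1}$ are exactly the entries of the paper's intermediate elements $\eta^i_k$), apply the A-2 exchange, and verify the two cancellation identities by a parity split. The only nit is that for $n=0$ the expressions $X_{n-1},Y_{n-1}$ should be read as the initial second and fourth entries $X'_0=qp^{n+1}$ and $Y'_0=q^{-1}p^{-n-1}$, for which the second identity still holds.
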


\begin{proof} In this proof in order to make the notation less cumbersome we will adopt the following convention different from Convention \ref{conv=}: given words $u_1, \dots, u_n$ with the notation $u_1 \dots u_n$ we mean the reduced product of the words $u_1, \dots, u_n$, i.e., their product in $\mathcal{F}(X)$.

For any natural number $k$ we consider the element $\eta_k$ of $H$ defined by 
	$$\eta_k := \begin{pmatrix}
	p^{-n+k} q^{-1} p^{-k} \centerdot & p^k q p^{n-k+1} \centerdot & p^{n-k} q p^k \centerdot & p^{-k} q^{-1} p^{-n+k-1}\\
	u 	& v & u^{-1}	& v^{-1}
	\end{pmatrix}.$$
It is easy to see that all the $\eta_k$ belong to $\phi(H)$. Indeed $\phi(h) = \eta_0$ and in general if we set $h_k := [(p^k\alpha, u), (p^k\beta, v), (p^{-k}\gamma, u^{-1}), (p^{-k}\delta, v^{-1})]$ then $\rho(h_k) = \eta_k$ (we observe that $h_0 = h$). Let us consider for $i \in \{0, 1, 2, 3\}$ the elements $\eta^i_k$ of $H$ defined in the following way:
\begin{enumerate}\setcounter{enumi}{-1}	
	\item $\eta^0_k := \eta_k$;
		
	\item $\eta^1_k := \begin{pmatrix}
		p^{k+1} q p^{n-k} \centerdot & p^{-n+k} q^{-1} p^{-k} \centerdot & p^{n-k} q p^k, u^{-1} \centerdot & p^{-k} q^{-1} p^{-n+k-1}\\
		v 	& u & u^{-1}	& v^{-1}
		\end{pmatrix}$;
		
	\item $\eta^2_k := \begin{pmatrix}
		p^{k+1} q p^{n-k} \centerdot & p^{-n+k} q^{-1} p^{-k} \centerdot & p^{-k-1} q^{-1} p^{-n+k} \centerdot & p^{n-k} q p^k, u^{-1} \\	
		v  & u  & v^{-1}  & u^{-1}
		\end{pmatrix}$;
		
	\item $\eta^3_k := \begin{pmatrix}
		p^{-n+k+1} q^{-1} p^{-k-1} \centerdot & p^{k+1} q p^{n-k} \centerdot & p^{-k-1} q^{-1} p^{-n+k} \centerdot & p^{n-k} q p^k \\	
		u  & v  & v^{-1}  & u^{-1}
		\end{pmatrix}$.	
\end{enumerate}
	
Let $g, g' \in H$. We prove the following facts:
	
\smallskip
	
\noindent (I) if $\phi(g) = \eta^0_k$ and if $g'$ is obtained from $g$ by an exchange of type B-1 then $\phi(g') = \eta^1_k$. Indeed the third and fourth elements of $\eta^0_k$ and $\eta^1_k$ are the same and the first element of $\eta^0_k$ is equal to the second of $\eta^1_k$. It remains to prove that if we set $\alpha := p^{-n+k} q^{-1} p^{-k}$, $\beta := p^k q p^{n-k+1}$ then $p^{k+1} q p^{n-k} = \rho(\alpha \beta \alpha^{-1})$, which is easy to verify.
	
\smallskip
	
\noindent (II) if $\phi(g) = \eta^1_k$ and if $g'$ is obtained from $g$ by an exchange of type B-3 then $\phi(g') = \eta^2_k$. Indeed the first and second elements of $\eta^1_k$ and $\eta^2_k$ are the same and the third element of $\eta^1_k$ is equal to the fourth of $\eta^2_k$. It remains to prove that if we set $\alpha := p^{-n+k} q^{-1} p^{-k}$, $\beta := p^k q p^{n-k+1}$ then $p^{-k-1} q^{-1} p^{-n+k} = \rho(\alpha \beta \alpha^{-1})$, which is easy to verify.

\smallskip
	
\noindent (III) if $\phi(g) = \eta^2_k$ and if $g'$ is obtained from $g$ by an exchange of type B-1 then $\phi(g') = \eta^3_k$. Indeed the third and fourth elements of $\eta^2_k$ and $\eta^3_k$ are the same and the first element of $\eta^2_k$ is equal to the second of $\eta^3_k$. It remains to prove that if we set $\alpha := p^{k+1} q p^{n-k}$, $\beta := p^{-n+k} q^{-1} p^{-k}$ then $p^{-n+k+1} q^{-1} p^{-k-1} = \rho(\alpha \beta \alpha^{-1})$, which is easy to verify.
	
\smallskip
	
\noindent (IV) if $\phi(g) = \eta^3_k$ and if $g'$ is obtained from $g$ by an exchange of type B-3 then $\phi(g') = \eta^0_{k+1}$. Indeed the first and second elements of $\eta^3_k$ and $\eta^0_{k+1}$ are the same and the third element of $\eta^3_k$ is equal to the fourth of $\eta^0_{k+1}$. It remains to prove that if we set $\alpha := p^{-k-1} q^{-1} p^{-n+k}$, $\beta := p^{n-k} q p^k$ then $p^{n-k-1} q p^{k+1} = \rho(\alpha \beta \alpha^{-1})$, which is easy to verify.

\medskip
	
For each natural number $i$ we define $g^k_i \in H$ in the following way: $g^k_0 := h_k$; if $i>0$ and $i$ is odd then $g^k_i$ is the element of $H$ obtained from $g^k_{i-1}$ after an exchange of type B-1; if $i>0$ and $i$ is even then $g^k_i$ is the element of $H$ obtained from $g^k_{i-1}$ after an exchange of type B-3.

We now prove that for every natural number $m$ and for every $i \in \{0, 1, 2, 3\}$ we have that $\phi(g_{4m+i}^k) = \eta^i_{k+m}$. If $m = 0$ then the claim follows by definition of $g$. Let $m > 0$ and the claim be true for $m - 1$; then we will prove that $\phi(g_{4m+i}^k) = \eta^i_{k+m}$ for $i = 0$, which will imply the claim for the other $i$'s as well. Indeed we have that $\phi(g_{4m}^k) = \phi(g_{4(m - 1) + 4}^k)$. We have that $g_{4(m - 1) + 4}^k$ is obtained from $g_{4(m - 1) + 3}^k$ by an exchange of type B-3; also by induction hypothesis we have that $\phi(g_{4(m - 1) + 3}^k) = \eta^3_{k+m-1}$. By what seen in (IV) we have that $\phi(g_{4(m - 1) + 4}^k) = \eta^0_{k+m}$, proving the claim.
	
From now we will assume that $k=0$ (we have that $h_0 = h$), thus	
	$$\phi(h) = \eta_0^0 = 
	\begin{pmatrix}
	p^{-n} q^{-1} \centerdot & q p^{n+1} \centerdot & p^{n} q \centerdot & q^{-1} p^{-n-1}\\
	u 	& v & u^{-1}	& v^{-1}
	\end{pmatrix}.$$
For every $j$ we will denote $g_j$ the element $g^0_j$. We have that $g_{2n}$ is the element obtained from $h$ after $n$ times the pair of exchanges B-1, B-3.
	
Let $n$ be even; then $n = 2m$ for some $m$ and $\phi(g_{2n}) = \phi(g_{4m}) = \eta^0_{m}$. Since $n - m = m$ then 
	$$\eta^0_{m} = 
	\begin{pmatrix}
	p^{-m} q^{-1} p^{-m} \centerdot & p^m q p^{m+1} \centerdot & p^{m} q p^m \centerdot & p^{-m} q^{-1} p^{-m-1}\\
	u 	& v & u^{-1}	& v^{-1}
	\end{pmatrix}.$$
	
If we apply an exchange of type A-2 to $g_{2n}$ we obtain an element $g' \in H$ such that	
	$$\phi(g') = \begin{pmatrix}
	p^{-m} q^{-1} p^{-m} \centerdot & p^{m} q p^m \centerdot & p^{m+1} q p^m \centerdot & p^{-m} q^{-1} p^{-m-1}\\
	u & u^{-1}	& v & v^{-1}
	\end{pmatrix}$$	
because if it is easy to see that if we set $\alpha := p^m q p^{m+1}$ and $\beta := p^{m} q p^m$ then $p^{m+1} q p^m = \rho(\beta^{-1} \alpha \beta)$. Since the first and second elements of $\phi(g')$ are inverse, as well as the third and fourth, we have that $\phi(g')$ reduces to 1 by means of two deletions. The claim for $n$ even follows by means of Remark \ref{phi(h)}

Let $n$ be odd; then $n = 2m + 1$ for some $m$ and $\phi(g_{2n}) = \phi(g_{4m+2}) = \eta^2_{m}$. Since $n - m = m +1$ then 
	$$\eta^2_{m} = \begin{pmatrix}
	p^{m+1} q p^{m+1} \centerdot & p^{-m-1} q^{-1} p^{-m} \centerdot & p^{-m-1} q^{-1} p^{-m-1} \centerdot & p^{m+1} q p^m\\
	v 	&  u & v^{-1} & u^{-1}
	\end{pmatrix}.$$
	
If we apply an exchange of type A-2 to $g_{2n}$ we obtain an element $g' \in H$ such that
	$$\phi(g') = \begin{pmatrix}
	p^{m+1} q p^{m+1} \centerdot & p^{-m-1} q^{-1} p^{-m-1} \centerdot & p^{-m} q^{-1} p^{-m-1} \centerdot & p^{m+1} q p^m\\
	v 	& v^{-1} & u	& u^{-1}
	\end{pmatrix}$$	
because if it is easy to see that if we set $\alpha := p^{-m-1} q^{-1} p^{-m}$ and $\beta := p^{-m-1} q^{-1} p^{-m-1}$ then $p^{-m} q^{-1} p^{-m-1} = \rho(\beta^{-1} \alpha \beta)$. Since the first and second elements of $\phi(g')$ are inverse, as well as the third and fourth, we have that $\phi(g')$ reduces to 1 by means of two deletions. The claim for $n$ odd follows by means of Remark \ref{phi(h)}  				
\end{proof}

\section{Some technical results} \label{AppB}

This section depends only on Sections \ref{S1} and \ref{S2} and can be read independently of Appendix \ref{AppA}.

\begin{lemma} \label{complic} Let $w$ be a non-empty cyclically reduced word and let $b$ be a reduced word. Then there exist words $w_1, w_2, b_1$ and a natural number $n$ such that $w = w_1 w_2$, $\rho(b w b^{-1}) = b_1 w_2 w_1 b_1^{-1}$ and at least one of the two holds
	\begin{enumerate} [(1)]
		\item $w b^{-1}$ is reduced, $w_2 \neq 1$ and $b = b_1 w_1^{-1} w^{-n}$;
		
		\item $b w$ is reduced, $w_1 \neq 1$ and $b = b_1 w_2 w^n$.		
	\end{enumerate}
\end{lemma}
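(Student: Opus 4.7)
The plan is to extract from $b$ the signed maximal power of $w$ already absorbed at its right end: write $b = b^{*} w^{k}$ as a concatenation, where $k \in \mathbb{Z}$ and $b^{*}$ is a reduced word whose last $|w|$ letters form neither $w$ nor $w^{-1}$. The integer $k$ and the word $b^{*}$ are uniquely determined by $b$. Since $w^{k} \cdot w \cdot w^{-k}$ reduces to $w$ by layer-by-layer cancellation, one obtains $\rho(bwb^{-1}) = \rho(b^{*} w (b^{*})^{-1})$, and the proof then analyzes this simpler conjugation according to whether the concatenation $b^{*} w$ is reduced or not.

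\textbf{Case I:} $b^{*} w$ is not reduced. Let $d \geq 1$ be the cancellation length at the boundary between $b^{*}$ and $w$. Since $b^{*}$ does not end in $w^{-1}$, one has $d < |w|$; hence $w = w_{1} w_{2}$ with $|w_{1}| = d$ and $w_{2} \neq 1$, and $b^{*} = b_{1} w_{1}^{-1}$ with $b_{1} w_{2}$ reduced (by the maximality of $d$). A direct computation gives $\rho(b^{*} w (b^{*})^{-1}) = b_{1} w_{2} w_{1} b_{1}^{-1}$, and this concatenation is reduced because $b_{1} w_{2}$ is reduced, $w_{2} w_{1}$ is a cyclic permutation of the cyclically reduced $w$, and the last letter of $b_{1}$ differs from that of $w_{1}$. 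Since $b^{*} \cdot w$ has cancellation, the concatenation $b^{*} w^{k}$ cannot be reduced for $k \geq 1$, forcing $k \leq 0$; setting $n := -k$ one obtains $b = b_{1} w_{1}^{-1} w^{-n}$, and a check that $w b^{-1} = w^{n+1} w_{1} b_{1}^{-1}$ is reduced establishes case (1) of the lemma.

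\textbf{Case II:} $b^{*} w$ is reduced. Let $\ell \geq 0$ be the cancellation length at the boundary between $w$ and $(b^{*})^{-1}$. Since $b^{*}$ does not end in $w$, one has $\ell < |w|$; write $w = w_{1} w_{2}$ with $|w_{2}| = \ell$ (so $w_{1} \neq 1$) and $b^{*} = b_{1} w_{2}$, so that again $\rho(b^{*} w (b^{*})^{-1}) = b_{1} w_{2} w_{1} b_{1}^{-1}$. If $k \geq 0$, then $b = b_{1} w_{2} w^{k}$ and verifying that $bw = b_{1} w_{2} w^{k+1}$ is reduced yields case (2) with $n := k$. If $k < 0$, then $\ell$ must equal $0$ (otherwise $b^{*}$ would end in the last letter of $w$, making the suffix $b^{*} w^{-1}$ of $b$ unreduced); choosing the equivalent trivial split $w_{1} := 1$, $w_{2} := w$, which does not change the product $w_{2} w_{1}$, places the result in case (1) with $b_{1} := b^{*}$ and $n := -k$.

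The main technical obstacle is the bookkeeping in the boundary situations where $w_{1}$ or $w_{2}$ threatens to be trivial: to meet the non-triviality constraints (``$w_{2} \neq 1$'' for case (1) and ``$w_{1} \neq 1$'' for case (2)) one sometimes must switch between equivalent decompositions $w = w_{1} w_{2}$, coordinating the choice with the sign of $k$.
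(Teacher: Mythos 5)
Your proof is correct, and it takes a genuinely different route from the paper's. The paper argues by induction on $|b|$: it first reduces case (2) to case (1) by inverting everything, then picks the factorization $b = b_1 w_1^{-1} w^{-n}$ with $n$ and $|w_1|$ maximal, and in the residual sub-case $w_1 = 1$, $b_1 \neq 1$, $n \neq 0$ it applies the induction hypothesis to the strictly shorter word $b_1$. You eliminate the induction by pre-normalizing $b$: stripping the maximal power $w^k$ from its right end leaves a reduced $b^*$ that cannot absorb a full copy of $w^{\pm 1}$, so the conjugation $b^* w (b^*)^{-1}$ can only exhibit a proper partial overlap ($w_1$ or $w_2$) at one of its two junctions, and the opposite junction is then automatically reduced because $w$ is cyclically reduced (the last letter of $w_2$ cannot cancel the first letter of $w_1$) --- exactly the configuration needed to apply Remark \ref{uvw} and read off $b_1 w_2 w_1 b_1^{-1}$ as an already reduced word. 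This gives a single-pass case analysis treating (1) and (2) symmetrically, with the sign of $k$ forced by the reducedness of $b$; the bookkeeping at the boundary ($\ell = 0$ when $k < 0$, the trivial re-split $w_1 := 1$) is handled correctly. The only blemish is cosmetic: in Case II with $k<0$, the word $b^* w^{-1}$ is a prefix of $b$, not a suffix. What your approach buys is a shorter, non-recursive argument that isolates the power of $w$ once and for all; what the paper's induction buys is that it never has to set up or justify the well-definedness of the maximal-power decomposition $b = b^* w^k$ as a separate preliminary step.
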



\begin{proof} If $b w b^{-1}$ is reduced then both (1) and (2) hold by taking $n = 0$ and $w_1 = 1$ in (1) and $w_2 = 1$ in (2).
	
Now let $b w b^{-1}$ be not reduced; then by Remark \ref{uvw} at least one between $b w$ and $w b^{-1}$ is non-reduced because $w \neq 1$. Indeed exactly one of the two is non-reduced because otherwise $w$ would not be cyclically reduced.
	
First we show that if we have proved (1) then (2) follows easily by taking inverses. Indeed if $b w$ is reduced then $w^{-1} b^{-1}$ is reduced. From (1) it follows that there exist words $w_1, w_2, b_1$ and a natural number $n$ such that $w^{-1} = w_2^{-1} w_1^{-1}$, $\rho(b w^{-1} b^{-1}) = b_1 w_1^{-1} w_2^{-1} b_1^{-1}$, $w_1 \neq 1$ and $b = b_1 w_2 w^n$. Thus we have that $\rho(b w b^{-1}) = b_1 w_2 w_1 b_1^{-1}$ and the claim is proved.
	
Now we assume that $w b^{-1}$ is reduced, that is $b w$ is not reduced and we prove (1) by induction on $|b|$. Let $|b| = 1$; then there exists a word $w_2$ such that $w = b^{-1} w_2$, thus 
	$$\rho(b w b^{-1}) = \rho(b b^{-1} w_2 b^{-1}) = \rho(w_2 b^{-1}) = w_2 b^{-1}$$
and the claim follows by setting $b_1 := 1$, $w_1 := b^{-1}$, $n = 0$.

Let us assume that $|b| > 0$ and the claim be true for every $b'$ of length less than $|b|$.
	
We observe that a factorization of the form $b = b_1 w_1^{-1} w^{-n}$ always exists, for example by taking $w_1 = 1$ and $n = 0$. We also observe that once we take a prefix $w_1$ of $w$, then there exists one and only one $w_2$ such that $w = w_1 w_2$.

It is obvious that there exists a factorization of the above form with $n$ and $|w_1|$ maximal. Let us take that factorization; since $n$ is maximal then $w_1 \neq w$ and thus $w_2 \neq 1$; since $|w_1|$ is maximal then $b_1 w_2$ is reduced.

We have that 
	$$\rho(b w b^{-1}) = \rho(b_1 w_1^{-1} w^{-n} w w^n w_1 b_1^{-1}) =$$
	$$\rho(b_1 w_1^{-1} w w_1 b_1^{-1})  = \rho(b_1 w_1^{-1} w_1 w_2 w_1 b_1^{-1}) = \rho(b_1 w_2 w_1 b_1^{-1}).$$
Let $b_1 = 1$; then since $w$ is cyclically reduced then $w_2 w_1$ is reduced, thus
	$$\rho(b_1 w_2 w_1 b_1^{-1}) = \rho(w_2 w_1) = w_2 w_1,$$
proving the claim.
	
Let $b_1 \neq 1$; we prove that if $w_1 \neq 1$ then $b_1 w_2 w_1 b_1^{-1}$ is reduced, proving the claim. Indeed $b_1 w_2$ and $w_2 w_1$ are reduced by what seen above and since $w_2 \neq 1$ then $b_1 w_2 w_1$ is reduced by Remark \ref{uvw}. If $b_1 w_2 w_1 b_1^{-1}$ is not reduced then there must be cancellation in $w_1 b_1^{-1}$ since $w_1 \neq 1$. If this were the case then there would exist words $w_3, x, b_2$ with $x \neq 1$ such that $w_1 = w_3 x$ and $b_1^{-1} = x^{-1} b_2^{-1}$. But this implies that $b_1 = b_2 x$ and $w_1^{-1} = x^{-1} w_3^{-1}$, therefore $b = b_2 x x^{-1} w_3^{-1} w^{-n}$ and there would be a non-trivial cancellation in $b$, contradicting the fact that $b$ is reduced.

Finally let $b_1 \neq 1$ and $w_1 = 1$; then $w_2 = w$, $\rho(b w b^{-1}) = \rho(b_1 w b_1^{-1})$ and $b = b_1 w^{-n}$.

First we assume that $n \neq 0$; then we have that $|b_1| < |b|$. If $b_1 w b_1^{-1}$ is reduced (in particular if $|b_1| = 0$), then $\rho(b w b^{-1}) = b_1 w b_1^{-1}$ and the claim is true. Otherwise by applying the induction hypothesis we have that there exist words $w_3, w_4, b_2$ and a natural number $m$ such that $w = w_3 w_4$, $w_4 \neq 1$, $b_1 = b_2 w_3^{-1} w^{-m}$ and $\rho(b_1 w b_1^{-1}) = b_2 w_4 w_3 b_2^{-1}$. Thus 
	$$b = b_2 w_3^{-1} w^{-m} w^{-n} = b_2 w_3^{-1} w^{-m -n}$$
and since $\rho(b w b^{-1}) = \rho(b_1 w b_1^{-1})$ the claim is proved.

Now we show that if $n = 0$ we have a contradiction. Indeed we have that $b = b_1$ and therefore $b w$ is reduced, but we have assumed that $b w$ is not reduced.	
\end{proof}

We observe that the classification given in Lemma \ref{shirv} has been assumed but not proved in (\cite{Ivanov06}, pag. 1564). Cases 1), 2) and 3) of Lemma \ref{shirv} correspond respectively to cases F2, F1 and F3 of (\cite{Ivanov06}, pag. 1564). We also observe that the proof of Lemma \ref{shirv} we give here follows closely the one we gave in \cite{PhDThs}.

\begin{lemma} \label{shirv} Let $u$ and $v$ be reduced words such that $u \neq v^{-1}$. Then one of the following holds:
	\begin{enumerate} [1)]
		\item there exist words $u_1, a, s$ such that $u = u_1 a$, $v = a^{-1} s (u*v) s^{-1} u_1^{-1}$ and $\rho(uv) = u_1 s (u*v) s^{-1} u_1^{-1}$;
		
		\item there exist non-empty words $c_1, c_2$ and words $t, a$ such that $u*v = c_1 c_2$, $u = t c_1 a$, $v = a^{-1} c_2 t^{-1}$, $\rho(uv) = t c_1 c_2 t^{-1}$, $\rho(v u) = a^{-1} c_2 c_1 a$ and $v*u = c_2 c_1$; 
		
		\item there exist words $v_1, s, a$ such that $u = v_1^{-1} s (u*v) s^{-1} a$, $v = a^{-1} v_1$ and $\rho(uv) = v_1^{-1} s (u*v) s^{-1} v_1$. 
	\end{enumerate}
\end{lemma}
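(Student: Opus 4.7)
The plan is to separate the cancellations in $uv$ into an internal phase and an external phase, and then apply Levi's Lemma. By Remark \ref{reducprod} there are reduced words $u_1, v_1, a$ with $u = u_1 a$, $v = a^{-1} v_1$ and $\rho(uv) = u_1 v_1$. The hypothesis $u \neq v^{-1}$ forces $\rho(uv) \neq 1$, and Remark \ref{scope} then yields a unique $t \in \mathcal{F}(X)$ such that
\[
u_1 v_1 \;=\; \rho(uv) \;=\; t\,(u*v)\,t^{-1},
\]
with the right-hand side a reduced word and $u*v \neq 1$. The key observation is that the ``bar'' separating $u_1$ from $v_1$ on the left must fall inside exactly one of the three blocks $t$, $u*v$, $t^{-1}$ on the right, and these three possibilities correspond to the three cases of the lemma.

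Next I would apply Levi's Lemma (Remark \ref{LeviLemma}) to the word equation $u_1 v_1 = t\,(u*v)\,t^{-1}$. If the bar lies inside the initial $t$, write $t = u_1 s$; then $v_1 = s\,(u*v)\,s^{-1} u_1^{-1}$, and substitution into $v = a^{-1} v_1$ yields case 1). If the bar lies inside the terminal $t^{-1}$, write $t^{-1} = p\,v_1$ and set $s := p^{-1}$; then $t = v_1^{-1} s$ and $u_1 = v_1^{-1} s\,(u*v)\, s^{-1}$, so case 3) follows from $u = u_1 a$. Otherwise the bar lies strictly inside $u*v$: writing $u_1 = t c_1$ and $v_1 = c_2 t^{-1}$ with $u*v = c_1 c_2$ gives the factorizations of case 2). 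The boundary instances $c_1 = 1$ and $c_2 = 1$ of this middle sub-case coincide with the endpoint configurations $s = 1$ of cases 1) and 3) respectively, so after these are assigned to 1) or 3) we may insist that $c_1, c_2 \neq 1$ in case 2).

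Finally I would derive the extra assertions about $\rho(vu)$ and $v*u$ in case 2). Since $u*v = c_1 c_2$ is cyclically reduced, its cyclic permutation $c_2 c_1$ is reduced; the words $a^{-1} c_2$ and $c_1 a$ are reduced as subwords of $v$ and $u$; and $c_1, c_2 \neq 1$. Two applications of Remark \ref{uvw} therefore show that $a^{-1} c_2 c_1 a$ is reduced. Because $vu = a^{-1} c_2 t^{-1} t c_1 a$ collapses via the single internal cancellation $t^{-1} t$ to $a^{-1} c_2 c_1 a$, this is $\rho(vu)$; the outer pair $a, a^{-1}$ then cancels externally to yield $v*u = c_2 c_1$. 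The main technical obstacle I anticipate is the boundary bookkeeping in the Levi analysis: verifying cleanly that the endpoints of the middle sub-case collapse into cases 1) and 3) with $s = 1$, rather than spawning a spurious fourth configuration, and that the three stated cases together exhaust all positions of the $u_1$/$v_1$ separator.
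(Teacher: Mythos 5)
Your proposal is correct and follows essentially the same route as the paper's proof: decompose via Remark \ref{reducprod} and Remark \ref{scope} to get $u_1 v_1 = t\,(u*v)\,t^{-1}$, split into three cases according to where the $u_1$/$v_1$ separator falls relative to $t$, $u*v$, $t^{-1}$, absorb the boundary subcases $c_1=1$ and $c_2=1$ into cases 1) and 3), and use Remark \ref{uvw} to get $\rho(vu)=a^{-1}c_2c_1a$ and $v*u=c_2c_1$ in case 2). No gaps.
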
 

\noindent \textbf{Observation} We recall that by Convention \ref{conv=} the equalities in 1), 2) and 3) of Lemma \ref{shirv} (like all the equalities of this paper) are equalities in the free monoid $\mathcal{M}(X \cup X^{-1})$ and not only equalities in $\mathcal{F}(X)$. For instance in 1) we have that $v$ can be factored as $a^{-1} s (u*v) s^{-1} u_1^{-1}$, so the word $a^{-1} s (u*v) s^{-1} u_1^{-1}$ is reduced since it is equal to $v$ which is reduced. 

\smallskip

\begin{proof} \textbf{of Lemma \ref{shirv}.} As in Remark \ref{reducprod}, let $u_1, v_1, a\in \mathcal{F}(X)$ be such that $u = u_1 a$, $v = a^{-1} v_1$ and $\rho(u v) = u_1 v_1$. As in Remark \ref{scope} let $t \in \mathcal{F}(X)$ be such that $u_1 v_1 = t (u*v) t^{-1}$. 
	
If $u*v = 1$ then $u = v^{-1}$ by Remark \ref{jarem}. We can therefore assume that $u*v \neq 1$. Since $u_1 v_1 = t (u*v)  t^{-1}$, three cases are possible: 
\begin{enumerate}
	\item $u_1$ is a prefix of $t$;
		
	\item $u_1$ is a prefix of $t (u*v)$ but not of $t$;
		
	\item $u_1$ is not a prefix of $t(u*v)$.
\end{enumerate}
	
Let us examine the three cases.	
	
\medskip
	
	\textbf{1.} \begin{tabular}{|c|c|}
		\hline
		\rule{0pt}{2.3ex}
		$u_1$ & $\,\,\,\,\,\,\,\,\,\,\,\,\,\,\, v_1 \,\,\,\,\,\,\,\,\,\,\,\,\,\, \hspace{0.5 mm}$ \\  
		\hline
	\end{tabular}
	
	\hspace{3.9mm} \begin{tabular}{|c|c|c|}
		\hline 
		\rule{0pt}{2.3ex}
		$\,\,\,\,\,\, t \,\,\,\,\,\,$ & $u*v$ & $t^{-1}$ \\
		\hline
	\end{tabular}
	
\medskip
	
\noindent There exists a word $s$ such that $t = u_1 s$ and $v_1 = s (u*v) t^{-1}$, thus $t^{-1} = s^{-1} u_1^{-1}$ and then $v_1 = s (u*v) s^{-1} u_1^{-1}$, thus $\rho(uv) = u_1 s (u*v) s^{-1} u_1^{-1}$. Since $v = a^{-1} v_1$ then $v = a^{-1} s (u*v) s^{-1} u_1^{-1}$.

\medskip

	\textbf{2.} \begin{tabular}{|c|c|}
		\hline
		\rule{0pt}{2.3ex}
		$\,\,\, u_1 \,\,\,$ & $\,\,\,\,\,\, v_1 \,\,\,\,\, \hspace{0.5 mm}$ \\  
		\hline
	\end{tabular}
	
	\hspace{3.9mm} \begin{tabular}{|c|c|c|}
		\hline 
		\rule{0pt}{2.3ex}
		$t$ & $u*v$ & $t^{-1}$ \\
		\hline
	\end{tabular}
	
	\medskip
	
\noindent There exist words $c_1, c_2$ such that $u_1 = t c_1$, $u*v = c_1 c_2$ and $v_1 = c_2 t^{-1}$, therefore $\rho(uv) = t c_1 c_2 t^{-1}$. Since $u = u_1 a$ then $u = t c_1 a$; since $v = a^{-1} v_1$ then $v = a^{-1} c_2 t^{-1}$.

\medskip

	\textbf{3.} \begin{tabular}{|c|c|}
		\hline
		\rule{0pt}{2.3ex}
		$\,\,\,\,\,\,\,\,\,\,\,\, u_1 \,\,\,\,\,\,\,\,\,\,\, \hspace{0.5 mm}$ & $v_1$ \\  
		\hline
	\end{tabular}
	
	\hspace{3.9mm} \begin{tabular}{|c|c|c|}
		\hline 
		\rule{0pt}{2.3ex}
		$t$ & $u*v$ & $\,\,\, t^{-1} \,\,\,$ \\
		\hline
	\end{tabular}
	
	\medskip
	
\noindent There exists a word $s$ such that $u_1 = t (u*v) s^{-1}$ and $t^{-1} = s^{-1} v_1$, thus $t = v_1^{-1} s$ and then $u_1 = v_1^{-1} s (u*v) s^{-1}$ and therefore $\rho(uv) = v_1^{-1} s (u*v) s^{-1} v_1$. Since $u = u_1 a$, then $u = v_1^{-1} s (u*v) s^{-1} a$.
	
\medskip
	
Case 2 will reduce to case 1 if $c_1 = 1$ and it will reduce to case 3 if $c_2 = 1$. Therefore we can assume that in case 2, $c_1$ and $c_2$ are non-empty words. This implies that in case 2 we have that
	$$\rho(v u) = \rho(a^{-1} c_2 c_1 a) = a^{-1} c_2 c_1 a,$$
where the last equality follows from Remark \ref{uvw}. This implies that $v*u = c_2 c_1$.
\end{proof}

\begin{remark} \label{} \rm The case where there is no cancellation between $u$ and $v$ in their cyclically reduced product corresponds to case 2) of Lemma \ref{shirv}. In that case $a = t = 1$.
\end{remark}

The graphical interpretation of Lemma \ref{shirv} is the following. Case 1) corresponds to Figure \ref{fig:case1Lem}: the internal circle represents the word $u$, which is completely canceled in the cyclically reduced product of $u$ by $v$. If $s = 1$ then the edge labeled by $s$ reduces to a point and the internal circle is tangent to the external cycle labeled by $w$.

Case 2) corresponds to Figure \ref{fig:case2Lem}. If $t = 1$ and $a \neq 1$ then the initial and final points of the edge labeled by $t$ coincide and that point will be also the final point of the edge labeled by $a$. If $a = 1$ and $t \neq 1$ then the situation is the same as above with the roles of $t$ and $a$ swapped. If both $t$ and $a$ are equal to 1 then the shape will be different, in particular it will consist of two circles labeled by $c_1$ and $c_2$ sharing one vertex.

Case 3) corresponds to Figure \ref{fig:case1Lem} but with the roles of $u$ and $v$ swapped.

\begin{figure}[h!]
	\begin{minipage}[b]{0.4\textwidth}
		\begin{picture}(180, 120) (95, -20)
		
			\put(163, 40){\oval(80, 80)}
		
			\put(145, 88){\vector(1, 0){4}}
			\put(145, 70){\oval(55, 35)[tl]}
		
			\put(180, 85){$w$}
			
			\put(123, 40){\line(1, 0){32}}
		
			\put(123, 40){\circle{5}}
			\put(123, 40){\circle*{3}}
		
			\put(133, 40){\vector(-1, 0){4}}
		 	\put(144, 42){$s$}
		
			\put(155, 40){\circle*{3}}
			\put(195, 40){\circle*{3}}

			\put(175, 40){\circle{38}}
		
			\put(175, 56){\vector(1, 0){4}}
			\put(175, 42){\oval(27, 27)[tl]}
		
			\put(170, 63){$a$}
			\put(170, 13){$u_1$}
		\end{picture}
	\caption{case 1) with $w := u*v$}
	\label{fig:case1Lem}
	\end{minipage}
	\hfill
	\begin{minipage}[b]{0.4\textwidth}
		\begin{picture}(180, 120)(95, -30)
		
			\put(163, 40){\oval(60, 60)}
			\put(163, 10){\line(0, 1){60}}
		
			\put(133, 40){\vector(0, 1){4}}
		 	\put(120, 42){$c_1$}
			
			\put(163, 10){\circle{5}}
			\put(163, 10){\circle*{3}}
			\put(163, 40){\circle*{3}}
			\put(163, 70){\circle*{3}}
		
			\put(163, 50){\vector(0, -1){4}}
			\put(163, 20){\vector(0, -1){4}}
		
			\put(155, 60){$a$}
			\put(155, 30){$t$}

			\put(193, 44){\vector(0, -1){4}}
		 	\put(198, 42){$c_2$}		
		\end{picture}
		\vspace*{-3mm}
		\caption{case 2)}
		\label{fig:case2Lem}
	\end{minipage}
\end{figure}

\begin{lemma} \label{shirv4} Let $u$ and $v$ be reduced words such that $u \neq v^{-1}$ and let $d$ be a cyclic permutation of $u*v$. Then there exists a pair of words $p$ and $q$ such that one of them is a cyclic permutation of $u$ and the other of $v$ and such that one of the following two cases holds: \begin{enumerate} [(a)]
		\item $q = p^{-1} r c_1 c_2 r^{-1}$ for words $r, c_1, c_2$ such that $p*q = c_1 c_2$ and $d = c_2 c_1$; moreover $p*q = u*v$; finally if $d = u*v$ then $c_2 = 1$;

		
		\item $p = e_2 b$ and $q = b^{-1} e_3 e_1$ for words $b, e_1, e_2, e_3$ such that $d = e_1 e_2 e_3$ and $e_2, e_3 e_1 \neq 1$; moreover either $p*q = u*v$ or $q*p = u*v$; finally if $d = u*v$ then $e_1 = 1$.		
	\end{enumerate} 
\end{lemma}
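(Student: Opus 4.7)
The plan is to invoke Lemma~\ref{shirv} on $u,v$ and, in each of its three cases, to produce explicit cyclic permutations $p,q$ of $u,v$ realizing conclusion (a) or (b) of Lemma~\ref{shirv4}. Cases 1 and 3 of Lemma~\ref{shirv} (where one of $u,v$ is ``absorbed'' around $u*v$) will yield (a); case 2 (where $u*v = c_1 c_2$ with both $c_1, c_2 \neq 1$) will yield (b). Throughout, I would insist that all claimed factorizations are literal equalities in $\mathcal{M}(X \cup X^{-1})$.

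In case 1 of Lemma~\ref{shirv}, one has $u = u_1 a$ and $v = a^{-1} s (u*v) s^{-1} u_1^{-1}$. I would decompose $u*v = c_1 c_2$ so that $d = c_2 c_1$, taking $c_1 := u*v$ and $c_2 := 1$ whenever $d = u*v$. Then I would set $p := a u_1$ (a cyclic permutation of $u$) and $q := u_1^{-1} a^{-1} s c_1 c_2 s^{-1}$, namely the cyclic permutation of $v$ obtained by cutting just before the trailing $u_1^{-1}$. Since $p^{-1} = u_1^{-1} a^{-1}$ as a word, one reads $q = p^{-1} \cdot s \cdot c_1 c_2 \cdot s^{-1}$ as a literal factorization, so (a) holds with $r := s$. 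The identity $p*q = u*v$ follows from the observation that $pq = a u_1 u_1^{-1} a^{-1} s c_1 c_2 s^{-1}$ reduces to $s c_1 c_2 s^{-1}$, whose cyclically reduced form is $c_1 c_2 = u*v$ because $c_1 c_2$ is cyclically reduced. Case 3 of Lemma~\ref{shirv} is handled symmetrically, with $p := v_1 a^{-1}$ (cyclic permutation of $v$) and $q := a v_1^{-1} s c_1 c_2 s^{-1}$ (cyclic permutation of $u$).

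In case 2 of Lemma~\ref{shirv}, $u = t c_1 a$, $v = a^{-1} c_2 t^{-1}$, $c_1, c_2 \neq 1$, and $d$ corresponds to a cut position in the cyclic word $c_1 c_2$. I would consider four sub-cases. If the cut is at position $0$ (so $d = c_1 c_2 = u*v$), take $p := c_1 a t$, $q := t^{-1} a^{-1} c_2$, $b := a t$, $e_1 := 1$, $e_2 := c_1$, $e_3 := c_2$; if at position $|c_1|$ (so $d = c_2 c_1$), take $p := c_2 t^{-1} a^{-1}$, $q := a t c_1$, $b := t^{-1} a^{-1}$, $e_1 := 1$, $e_2 := c_2$, $e_3 := c_1$. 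For a cut strictly inside $c_1 = c_1^L c_1^R$, reuse the latter $(p, q, b)$ but with $e_1 := c_1^R$, $e_2 := c_2$, $e_3 := c_1^L$; for a cut strictly inside $c_2 = c_2^L c_2^R$, reuse the former $(p, q, b)$ with $e_1 := c_2^R$, $e_2 := c_1$, $e_3 := c_2^L$. In every sub-case $p = e_2 b$ and $q = b^{-1} e_3 e_1$ are literal concatenations, $d = e_1 e_2 e_3$ by construction, and $e_2, e_3 e_1$ are non-empty since they coincide with $c_1$ or $c_2$. The identity $p*q = u*v$ (in sub-cases with $b = a t$) or $q*p = u*v$ (in sub-cases with $b = t^{-1} a^{-1}$) then follows because $pq$ or $qp$ reduces to $c_1 c_2$ or $t c_1 c_2 t^{-1}$, both of which have cyclically reduced form $c_1 c_2 = u*v$.

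The main obstacle will be verifying that these identities hold as factorizations in $\mathcal{M}(X \cup X^{-1})$, not merely as equalities in $\mathcal{F}(X)$: each candidate $p$ or $q$ must be exhibited as a genuine cyclic rotation of the reduced factorization supplied by Lemma~\ref{shirv}, and the claimed cancellations must correspond to literal occurrences of mutually inverse adjacent subwords (requiring repeated appeal to Remark~\ref{uvw} to argue reducedness of the various intermediate concatenations). A secondary subtlety is that when $u*v$ has non-trivial cyclic symmetry a single $d$ may be produced by several cut positions and thus admit several of the sub-case representations above; the ``$d = u*v \Rightarrow e_1 = 1$'' clause in (b) is honored by always preferring the position-$0$ sub-case when $d = u*v$, and the analogous ``$d = u*v \Rightarrow c_2 = 1$'' clause in (a) by the corresponding choice of splitting $u*v = c_1 c_2$.
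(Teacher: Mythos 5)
Your overall strategy coincides with the paper's: invoke Lemma \ref{shirv}, send its cases 1 and 3 to conclusion (a) and its case 2 to conclusion (b) by locating the cut position of $d$ inside $c_1c_2$ via Levi's Lemma. Your treatment of cases 1 and 3 is correct and essentially identical to the paper's. However, in case 2 your choice of rotations for a cut at position $|c_1|$ or strictly inside $c_1$ is wrong. There you take $p := c_2 t^{-1} a^{-1}$ and $q := a t c_1$ and claim $q*p = u*v$. But then $qp = (at)(c_1c_2)(at)^{-1}$ is a \emph{conjugate} of $c_1c_2$ with the conjugator on the outside, and the cyclically reduced form of $g w g^{-1}$ for $w$ cyclically reduced is in general only a cyclic permutation of $w$, not $w$ itself (this is exactly the content of Lemma \ref{complic} and of the example following Corollary \ref{permCycRedForm}). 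Concretely, take $u := xx$ and $v := x^{-1}yx$, so that in Lemma \ref{shirv} case 2 one has $t = 1$, $c_1 = x$, $a = x$, $c_2 = yx$, $u*v = xyx$. For $d := c_2c_1 = yxx$ your recipe gives $p = yxx^{-1}$ and $q = xx$, whence $q*p = \hat{\rho}(xxyxx^{-1}) = xxy$ and $p*q = \hat{\rho}(yxx^{-1}xx) = yxx$; neither equals $u*v = xyx$, so the requirement ``either $p*q = u*v$ or $q*p = u*v$'' fails.

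The paper avoids this by choosing the \emph{other} rotations in that sub-case, namely $q := c_1 a t$ and $p := t^{-1} a^{-1} c_2$, so that the conjugator stays in the middle: $qp = c_1 a t\, t^{-1} a^{-1} c_2$ cancels literally to $c_1 c_2$ and hence $q*p = u*v$ always. (The price is that $p$ and $q$ then take the mirrored form $p = b e_2$, $q = e_3 e_1 b^{-1}$ with $b = t^{-1}a^{-1}$, rather than the form $p = e_2 b$, $q = b^{-1} e_3 e_1$ written in conclusion (b); the paper's own proof of its case 2.1 has this same mismatch with the statement.) Your sub-cases ``cut at position $0$'' and ``cut strictly inside $c_2$'' are fine, since there $pq = c_1 a t\, t^{-1} a^{-1} c_2$ does cancel internally; and the boundary cut $d = c_2 c_1$ could also be rescued by treating it as the degenerate instance $c_2^L = 1$ of your ``inside $c_2$'' sub-case. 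But as written, the assignment you make for cuts at or inside $c_1$ does not prove the product identity, and this is a genuine gap rather than a presentational issue.
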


\begin{proof} We show that the claim is true for the three cases of Lemma \ref{shirv}.
	
\textbf{Case 1.} We have that $u = u_1 a$, $v = a^{-1} r (u*v) r^{-1} u_1^{-1}$. If we set $p := a u_1$ and $q := u_1^{-1} a^{-1} r (u*v) r^{-1} =  p^{-1} r (u*v) r^{-1}$, then $p$ is a cyclic permutation of $u$, $q$ is a cyclic permutation of $v$ and 
	$$p*q = \hat{\rho}(pq) = \hat{\rho}(a u_1 u_1^{-1} a^{-1} r (u*v) r^{-1}) = \hat{\rho}(r (u*v) r^{-1}) = u*v,$$
where we have used the fact that $r (u*v) r^{-1}$ is reduced since it is a subword of $v$, which is reduced.

Moreover since $d$ is a cyclic permutation of $u*v$ there exist words $c_1, c_2$ such that $u*v = c_1 c_2$ and $d = c_2 c_1$. Finally if $d = u*v$ then we can take $c_2 = 1$, so case (a) holds.

\smallskip

\textbf{Case 2.} We have that there exist non-empty words $c_1, c_2$ such that $u*v = c_1 c_2$ and $u = t c_1 a$, $v = a^{-1} c_2 t^{-1}$. If we set $u' := c_1 a t$ and $v' := t^{-1} a^{-1} c_2$, then $u'$ is a cyclic permutation of $u$ and $v'$ a cyclic permutation of $v$. 

If $d = u*v$ then we set $e_2 := c_1$, $e_3 := c_2$, $b := at$, $p := u'$, $q := v'$ and case (b) holds. 

Let us study the general case. Since $d$ is a cyclic permutation of $u*v$, there exist words $d_1, d_2$ such that $u*v = d_1 d_2$ and $d = d_2 d_1$. Since $c_1 c_2 = d_1 d_2$ then by Remark \ref{LeviLemma} there exists a word $x$ such that either $c_1 = d_1 x$ and $d_2 = x c_2$ or $d_1 = c_1 x$ and $c_2 = x d_2$.

\smallskip

\textbf{-- Case 2.1:} $c_1 = d_1 x$ and $d_2 = x c_2$. Then $u' = d_1 x a t$.
	
Let us set $e_1 := x$, $e_2 := c_2$, $e_3 := d_1$. Then $u' = e_3 e_1 a t$, $v' = t^{-1} a^{-1} e_2$ and $c_1 = e_3 e_1$. Moreover $u*v = c_1 c_2 = e_3 e_1 e_2$ and $e_1 e_2 e_3 = x c_2 d_1 = d_2 d_1 = d$. 
	
Set $b := t^{-1} a^{-1}$; then $u' = e_3 e_1 b^{-1}$ and $v' = b e_2$ and we set $p := v'$, $q := u'$. Finally $q*p = \hat{\rho}(qp) = \hat{\rho}(e_3 e_1 b^{-1} b e_2) = e_3 e_1 e_2 = u*v$, thus case (b) holds.

\smallskip

\textbf{-- Case 2.2:} $d_1 = c_1 x$ and $c_2 = x d_2$. Then $v' = t^{-1} a^{-1} x d_2$. 
	
Let us set $e_1 := d_2$, $e_2 := c_1$, $e_3 := x$. Then $u' := e_2 a t$, $v' = t^{-1} a^{-1} e_3 e_1$ and $c_2 = e_3 e_1$. Moreover $u*v = c_1 c_2 = e_2 e_3 e_1$ and $e_1 e_2 e_3 = d_2 c_1 x = d_2 d_1 = d$. 
	
Set $b := a t$, $p := u'$ and $q := v'$; then $p = e_2 b$, $q = b^{-1} e_3 e_1$ and $p*q = \hat{\rho}(pq) = \hat{\rho}(e_2 b b^{-1} e_3 e_1) = e_2 e_3 e_1 = u*v$, thus case (b) holds.

\smallskip
	
\textbf{Case 3.} We have that $u = v_1^{-1} r (u*v) r^{-1} a$ and $v = a^{-1} v_1$.  If we set $p := v_1 a^{-1}$ and $q := a v_1^{-1} r (u*v) r^{-1} = p^{-1} r (u*v) r^{-1}$, then $p$ is a cyclic permutation of $v$, $q$ is a cyclic permutation of $u$ and 
	$$p*q = \hat{\rho}(pq) = \hat{\rho}(v_1 a^{-1} a v_1^{-1} r (u*v) r^{-1}) = \hat{\rho}(r (u*v) r^{-1}) = u*v,$$
where we have used the fact that $r (u*v) r^{-1}$ is reduced since it is a subword of $u$, which is reduced.

Moreover since $d$ is a cyclic permutation of $u*v$ there exist words $c_1, c_2$ such that $u*v = c_1 c_2$ and $d = c_2 c_1$. Finally if $d = u*v$ then we can take $c_2 = 1$, so case (a) holds. 
\end{proof} 

\smallskip

\begin{lemma} \label{ax=b} Let $X$ have at least two elements and let $u, w$ be reduced non-empty words. Then there exists a word $s$ of length at most 2 such that for every $n \in \mathbb{N}\setminus\{0\}$ the word $u s^n w s^{-n}$ is cyclically reduced.
\end{lemma}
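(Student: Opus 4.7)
My plan is to translate the requirement that $u s^n w s^{-n}$ be cyclically reduced for every $n \geq 1$ into constraints on the first and last letters of $s$ alone (independent of $n$), and then produce such an $s$ of length $1$ or $2$ by a counting argument. Denote by $u_1, u_\ell$ the first and last letters of $u$, and by $w_1, w_\ell$ those of $w$. I will take $s$ cyclically reduced (which is automatic if $|s|=1$, and amounts to $t' \neq t^{-1}$ if $s = t\,t'$ has length $2$), so that by Remark~\ref{obvi3} both $s^n$ and $s^{-n}$ are reduced for every $n \geq 1$; moreover, $s^n$ has first letter $t$ and last letter $t'$, while $s^{-n}$ has first letter $t'^{-1}$ and last letter $t^{-1}$, independently of $n$.

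Examining the four junctions inside $u\,s^n\,w\,s^{-n}$ shows that it is reduced and cyclically reduced, for all $n \geq 1$ simultaneously, iff the following four conditions hold: $t \neq u_\ell^{-1}$ (junction $u|s^n$), $t' \neq w_1^{-1}$ (junction $s^n|w$), $t' \neq w_\ell$ (junction $w|s^{-n}$, since $s^{-n}$ begins with $t'^{-1}$), and $t \neq u_1$ (cyclic junction $s^{-n}|u$, since $s^{-n}$ ends with $t^{-1}$). No further condition is needed, because the interior of $s^n$ and $s^{-n}$ is reduced by the choice of $s$ cyclically reduced.

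First I would attempt $|s| = 1$, i.e., $t = t'$. The four conditions collapse to $s \notin \{u_1, u_\ell^{-1}, w_\ell, w_1^{-1}\}$, a forbidden subset of $X \cup X^{-1}$ of size at most $4$. Since $|X \cup X^{-1}| = 2|X| \geq 4$, this has a solution whenever $|X| \geq 3$; a length-$1$ choice may however be impossible in the extremal case $|X| = 2$, when these four values can exhaust $X \cup X^{-1}$.

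In that remaining case I would use $|s| = 2$. The constraints on $t$ are $t \notin \{u_1, u_\ell^{-1}\}$, at most two forbidden values in a set of size $\geq 4$, leaving at least two choices for $t$; once $t$ is fixed, the constraints on $t'$ become $t' \notin \{t^{-1}, w_1^{-1}, w_\ell\}$, at most three forbidden values in the same $4$-element set, leaving at least one choice. Setting $s := t\,t'$ then works for every $n \geq 1$. The main potential subtlety is making sure the junction analysis really does not depend on $n$, but this is immediate once one notes that $s^n$ and $s^{-n}$ have the same first and last letters as $s$ and $s^{-1}$ respectively; hence one check handles all $n \geq 1$ at once.
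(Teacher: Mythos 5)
Your proof is correct. The four junction conditions you isolate ($t \neq u_\ell^{-1}$, $t' \neq w_1^{-1}$, $t' \neq w_\ell$, $t \neq u_1$, with $s = t t'$ cyclically reduced) are exactly the right ones, and your observation that $s^n$ and $s^{-n}$ carry the same first and last letters as $s$ and $s^{-1}$ is also the device the paper uses to reduce to the case $n=1$. Where you genuinely diverge is in how the word $s$ is produced. The paper fixes the first and last letters $a, b$ of $u$ and $c, d$ of $w$ and then runs an explicit case analysis (whether $u$ is cyclically reduced, whether $a=b$, and five subcases each on how a chosen letter interacts with $c$ and $d$, plus separate treatment when $u$ or $w$ fails to be cyclically reduced), exhibiting a concrete $s$ such as $x$, $x a^{-1}$, $ba$, etc.\ in each case. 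You instead note that the constraints on $t$ and on $t'$ are independent and each forbids at most two, respectively three, values in the set $X \cup X^{-1}$ of size $2|X| \geq 4$, so a pigeonhole count always yields an admissible pair $(t,t')$. This is shorter, uniform, and avoids the paper's subdivision entirely; what you give up is only the explicit description of $s$ (and the information that length $1$ already suffices when $|X| \geq 3$, which you in fact also recover). One small point worth making explicit in a final write-up: for $|s|=2$ the condition that $s = tt'$ be reduced and the condition that it be cyclically reduced are the same inequality $t' \neq t^{-1}$, which is why a single exclusion on $t'$ covers both and why Remark~\ref{obvi3} then applies to give that $s^n$ and $s^{-n}$ are reduced.
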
 

\begin{proof} It is enough to prove that there exists $s$ such that $|s| \leq 2$ and $u s w s^{-1}$ is cyclically reduced. Indeed since $s$ is reduced and has length at most 2, then it is cyclically reduced, so $s^n$ and $s^{-n}$ are cyclically reduced. Moreover the first and last letters of $s^n$ [respectively of $s^{-n}$] are the same as those of $s$ [respectively of $s^{-1}$], so if $u s w s$ is cyclically reduced then so is $u s^n w s^{-n}$.

\smallskip

We have that there exist $a, b, c, d \in X \cup X^{-1}$ and words $u_1, u_2, w_1, w_2 \in \hat{\mathcal{F}}(X)$ such that $u = a u_1 = u_2 b$, $w = c w_1 = w_2 d$. The words $u_1, u_2, w_1, w_2$ are necessarily reduced and they can be equal to 1 (in case $u$ or $w$ is a single letter).

\smallskip

First we assume that $u$ is cyclically reduced; this implies that $a \neq b^{-1}$. We split the proof in two different cases: 1) $a = b$; 2) $a \neq b$.

\smallskip

\textbf{Case 1.} Then $u = a u_1 = u_2 a$. Since $X$ has at least two elements, there exists $x \in X \cup X^{-1}$ such that $x \neq a, a^{-1}$. This implies in particular that the words $x a$, $x a^{-1}$, $a x$, $a^{-1} x$ are (cyclically) reduced. We split Case 1 into five subcases.

\smallskip

-- \textbf{Case 1.1}: $x \neq c^{-1}, d$. Then we can take $s = x$. Indeed $u s w s^{-1}$ is cyclically reduced because: the last letter of $u$ is $a$, the first letter of $s$ is $x$ and $a \neq x^{-1}$; the last letter of $s$ is $x$, the first letter of $w$ is $c$ and $x \neq c^{-1}$; the last letter of $w$ is $d$, the first letter of $s^{-1}$ is $x^{-1}$ and $d \neq x$; the last letter of $s^{-1}$ is $x^{-1}$, the first letter of $u$ is $a$ and $x \neq a$.

\smallskip

-- \textbf{Case 1.2}: $x = c^{-1}, x \neq d^{-1}$, thus $w = x^{-1} w_1 = w_2 d$. Then we can take $s = x^{-1}$. Indeed $u s w s^{-1}$ is cyclically reduced because: the last letter of $u$ is $a$, the first letter of $s$ is $x^{-1}$ and $a \neq x$; the last letter of $s$ is $x^{-1}$ and the first letter of $w$ is also $x^{-1}$; the last letter of $w$ is $d$, the first letter of $s^{-1}$ is $x$ and $d \neq x^{-1}$; the last letter of $s^{-1}$ is $x$, the first letter of $u$ is $a$ and $x^{-1} \neq a$.

\smallskip

-- \textbf{Case 1.3}: $x = c^{-1} = d^{-1}$, thus $w = x^{-1} w_1 = w_2 x^{-1}$. Then we can take $s = x a^{-1}$. Indeed $u s w s^{-1}$ is cyclically reduced because: the last letter of $u$ is $a$, the first letter of $s$ is $x$ and $a^{-1} \neq x$; the last letter of $s$ is $a^{-1}$, the first letter of $w$ is $x^{-1}$ and $a \neq x^{-1}$; the last letter of $w$ is $x^{-1}$, the first letter of $s^{-1}$ is $a$ and $x \neq a$; the last letter of $s^{-1}$ is $x^{-1}$, the first letter of $u$ is $a$ and $x \neq a$.


\smallskip

-- \textbf{Case 1.4}: $x = d, x \neq c$, thus $w = c w_1 = w_2 x$. Then we can take $s = x^{-1}$. Indeed $u s w s^{-1}$ is cyclically reduced because: the last letter of $u$ is $a$, the first letter of $s$ is $x^{-1}$ and $a \neq x$; the last letter of $s$ is $x^{-1}$, the first letter of $w$ is $c$ and $x \neq c$; the last letter of $w$ is $x$ and the first letter of $s^{-1}$ is also $x$; the last letter of $s^{-1}$ is $x$, the first letter of $u$ is $a$ and $x^{-1} \neq a$.

\smallskip

-- \textbf{Case 1.5}: $x = d = c$, thus $w = x w_1 = w_2 x$. Then we can take $s = x^{-1} a^{-1}$. Indeed $u s w s^{-1}$ is cyclically reduced because: the last letter of $u$ is $a$, the first letter of $s$ is $x^{-1}$ and $a^{-1} \neq x^{-1}$; the last letter of $s$ is $a^{-1}$, the first letter of $w$ is $x$ and $a \neq x$; the last letter of $w$ is $x$, the first letter of $s^{-1}$ is $a$ and $x^{-1} \neq a$; the last letter of $s^{-1}$ is $x$, the first letter of $u$ is $a$ and $x^{-1} \neq a$.


\medskip

\textbf{Case 2.} We split Case 2 into five subcases.

\smallskip

-- \textbf{Case 2.1}: $b \neq c^{-1}, d$. Then we can take $s = b$. Indeed $u s w s^{-1}$ is cyclically reduced because: the last letter of $u$ is $b$ and the first letter of $s$ is also $b$; the last letter of $s$ is $b$, the first letter of $w$ is $c$ and $b \neq c^{-1}$; the last letter of $w$ is $d$, the first letter of $s^{-1}$ is $b^{-1}$ and $d^{-1} \neq b$; the last letter of $s^{-1}$ is $b^{-1}$, the first letter of $u$ is $a$ and $b \neq a$.

\smallskip

-- \textbf{Case 2.2}: $b = c^{-1}, a \neq d^{-1}$, thus $w = b^{-1} w_1 = w_2 d$. Then we can take $s = a^{-1}$. Indeed $u s w s^{-1}$ is cyclically reduced because: the last letter of $u$ is $b$, the first letter of $s$ is $a^{-1}$ and $b \neq a$; the last letter of $s$ is $a^{-1}$, the first letter of $w$ is $b^{-1}$ and $a \neq b^{-1}$; the last letter of $w$ is $d$, the first letter of $s^{-1}$ is $a$ and $d \neq a^{-1}$; the last letter of $s^{-1}$ is $a$ and the first letter of $u$ is also $a$.



\smallskip

-- \textbf{Case 2.3}: $b = c^{-1}, a = d^{-1}$, thus $w = b^{-1} w_1 = w_2 a^{-1}$. Then we can take $s = b a$. Indeed $u s w s^{-1}$ is cyclically reduced because: the last letter of $u$ is $b$, the first letter of $s$ is also $b$; the last letter of $s$ is $a$, the first letter of $w$ is $b^{-1}$ and $a \neq b$; the last letter of $w$ is $a^{-1}$ and the first letter of $s^{-1}$ is also $a^{-1}$; the last letter of $s^{-1}$ is $b^{-1}$, the first letter of $u$ is $a$ and $b \neq a$.

\smallskip

-- \textbf{Case 2.4}: $b = d, a \neq c$, thus $w = c w_1 = w_2 b$. Then we can take $s = a^{-1}$. Indeed $u s w s^{-1}$ is cyclically reduced because: the last letter of $u$ is $b$, the first letter of $s$ is $a^{-1}$ and $b \neq a$; the last letter of $s$ is $a^{-1}$, the first letter of $w$ is $c$ and $a \neq c$; the last letter of $w$ is $b$, the first letter of $s^{-1}$ is $a$ and $b^{-1} \neq a$; the last letter of $s^{-1}$ is $a$ and the first letter of $u$ is also $a$.

\smallskip

-- \textbf{Case 2.5}: $b = d, a = c$, thus $w = a w_1 = w_2 b$. Then we can take $s = b a$. Indeed $u s w s^{-1}$ is cyclically reduced because: the last letter of $u$ is $b$ and the first letter of $s$ is also $b$; the last letter of $s$ is $a$ and the first letter of $w$ is also $a$; the last letter of $w$ is $b$, the first letter of $s^{-1}$ is $a^{-1}$ and $b \neq a$; the last letter of $s^{-1}$ is $b^{-1}$, the first letter of $u$ is $a$ and $b \neq a$.


\smallskip

Now let $u$ be not cyclically reduced, that is $a = b^{-1}$. 

If $w$ is cyclically reduced, then by applying the previous argument to $w, u$ we have that there exists a word $s$ of length at most 1 such that $w s u s^{-1}$ is cyclically reduced. Therefore the cyclic permutation $u s^{-1} w s$ of the former word is cyclically reduced and we have proved the claim.

Now let $w$ be not cyclically reduced; this implies that $d = c^{-1}$, that is $w = c w_1 = w_2 c^{-1}$. If $a \neq c$, then $u w$ is cyclically reduced. Let $a = c$; since $X$ has at least two elements, there exists $x \in X \cup X^{-1}$ such that $x \neq a, a^{-1}$. Then by taking $s = x$ we have that $u s w s^{-1}$ is cyclically reduced.
\end{proof}

\begin{corollary} \label{ax=b2} Let $u, w$ be reduced non-empty words. Then there exist infinitely many pairs of cyclically reduced words $v, v'$ such that $v$ is a cyclic permutation of $v'$ and $u*v = v'*u = \hat{\rho}(w)$.
\end{corollary}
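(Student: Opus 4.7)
My plan is to derive the corollary directly from Lemma \ref{ax=b} (implicitly working under $|X| \geq 2$, as Lemma \ref{ax=b} requires; for $|X|=1$ the monoid $\hat{\mathcal{F}}(X)$ is abelian and only one cyclically reduced $v$ satisfies $u * v = \hat{\rho}(w)$). As a first step I would invoke Remark \ref{barrho=1}(2): since $w$ is reduced and non-empty, $\rho(w) = w \neq 1$ so $\hat{\rho}(w) \neq 1$; hence by replacing $w$ with $\hat{\rho}(w)$ I may assume that $w$ itself is a non-empty cyclically reduced word.

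Now I would apply Lemma \ref{ax=b} to $(u, w)$, taking the output $s$ to be non-trivial (the case analysis of its proof always permits $|s| \geq 1$ when $|X| \geq 2$), so that $u s^n w s^{-n}$ is cyclically reduced for every $n \geq 1$. For each such $n$ I set
$$v_n := u^{-1} s^n w s^{-n}$$
as a concatenation, and I will show that $v_n$ is cyclically reduced and that $u * v_n = v_n * u = w$. Since $|v_n| = |u| + 2n|s| + |w|$ is strictly increasing in $n$, the $v_n$ are pairwise distinct, so the pairs $(v_n, v_n)$ (each $v_n$ being trivially a cyclic permutation of itself) provide the required infinite family.

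The verification that $v_n$ is cyclically reduced combines three facts: $s^n w s^{-n}$ is a subword of the reduced word $u s^n w s^{-n}$ and hence reduced; the junction $u^{-1} s^n$ is reduced because the cyclic-reducedness of $u s^n w s^{-n}$ forces the first letter of $u$ to differ from the first letter of $s$; and Remark \ref{uvw} applied with middle word $s^n \neq 1$ then yields $v_n$ reduced. Cyclic reducedness of $v_n$ boils down to requiring that the last letter of $u$ differ from the inverse of the first letter of $s$, which is exactly the condition that $u s^n$ be reduced. To compute $u * v_n$, I note that $u v_n = u u^{-1} s^n w s^{-n}$ internally collapses to $s^n w s^{-n}$, whose outer $s/s^{-1}$ pairs then peel off externally to leave $w$; similarly $v_n u = u^{-1} s^n w s^{-n} u$ is reduced (the junction $s^{-n} u$ again being fine by cyclic reducedness of $u s^n w s^{-n}$) and its external cancellations strip first the $u^{-1}/u$ pair and then the $s^n/s^{-n}$ pairs, again leaving $w$.

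The main delicate point I anticipate is ensuring that $s$ can be chosen non-trivial in the application of Lemma \ref{ax=b}: the statement of that lemma nominally allows $s = 1$ in the corner case where $uw$ is already cyclically reduced, but even there a letter $x \in X \cup X^{-1}$ avoiding the at most four forbidden values (dictated by the reducedness and cyclic-reducedness of $u x^n w x^{-n}$) can be chosen since $|X \cup X^{-1}| \geq 4$, so $s := x$ works. Once a non-trivial $s$ is in hand, the rest of the argument is routine verification of reducedness at the handful of junctions listed above.
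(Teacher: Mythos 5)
Your argument is correct and is essentially the paper's: both rest on Lemma \ref{ax=b} and on the word $u^{-1}s^{n}ws^{-n}$, whose $uu^{-1}$ part and conjugating $s^{\pm n}$ parts telescope away under $\hat{\rho}$. The differences are technical. The paper applies Lemma \ref{ax=b} to the pair $(u^{-1},w)$ rather than $(u,w)$, so that $v:=u^{-1}s^{n}ws^{-n}$ is cyclically reduced by the lemma itself and your letter-by-letter junction analysis (correct, but avoidable) is not needed; it also skips your preliminary reduction to cyclically reduced $w$ by writing $w=t\hat{\rho}(w)t^{-1}$ and invoking Corollary \ref{permCycRedForm}; and it takes $v':=s^{n}ws^{-n}u^{-1}$, a non-trivial cyclic permutation of $v$, where you take $v'=v$ (the statement is satisfied either way). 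One sub-argument of yours is wrong as stated, though harmless: avoiding ``at most four forbidden values'' in $X\cup X^{-1}$ is not always possible when $|X|=2$ --- for $u=w=xy$ all four letters $x,x^{-1},y,y^{-1}$ are forbidden and Lemma \ref{ax=b} genuinely needs $|s|=2$ there. Fortunately the point is moot: once you have replaced $w$ by $\hat{\rho}(w)$, the only branch of the proof of Lemma \ref{ax=b} that outputs $s=1$ (namely $u$ and $w$ both non-cyclically-reduced) cannot occur, so $s\neq 1$ comes for free; note that the paper's own proof also relies tacitly on $s\neq 1$, since otherwise the constructed pairs would not be pairwise distinct.
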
 

\begin{proof} By Lemma \ref{ax=b}, since $u^{-1}$ and $w$ are reduced non-empty words, there exists a word $s$ such that for every $n \in \mathbb{N}\setminus\{0\}$ the word $u^{-1} s^n w s^{-n}$ is cyclically reduced. Let us fix an $n$ and let us set $v := u^{-1} s^n w s^{-n}$; we have that 
	$$u * v = u * (u^{-1} s^n w s^{-n}) = \hat{\rho}(u u^{-1} s^n w s^{-n}) = \hat{\rho}(s^n w s^{-n}).$$ 

Since $w$ is reduced, then by Remark \ref{scope} there exists a word $t$ such that $w = t \hat{\rho}(w) t^{-1}$. This implies that $\hat{\rho}(s^n w s^{-n}) = \hat{\rho}(s^n t \hat{\rho}(w) t^{-1} s^{-n})$.

Since $s^n t \hat{\rho}(w) t^{-1} s^{-n}$ is reduced, by Corollary \ref{permCycRedForm} we have that 	
	$$\hat{\rho}(s^n t \hat{\rho}(w) t^{-1} s^{-n}) = \hat{\rho}(w),$$
thus $u*v = \hat{\rho}(w)$.

Now let us set $v' := s^n w s^{-n} u^{-1}$. We have that $v'$ is cyclically reduced because it is a cyclic permutation of a cyclically reduced word. With the same reasoning as above we prove that $v'*u = \hat{\rho}(w)$.
\end{proof}

\textit{Address:}

Carmelo Vaccaro

Laboratoire de Mathématiques d'Orsay

Université Paris-Saclay 

Bâtiment 307, rue Michel Magat

91400 Orsay

\medskip

\textit{e-mail:} \textsf{carmelo.vaccaro@universite-paris-saclay.fr}

\end{document}